\newcommand {\mm}[1] {\ifmmode{#1}\else{\mbox{\(#1\)}}\fi}
\newcommand{\ignore}[1]{}
\newsavebox{\smallProofsym}                 
\long\def\@makecaption#1#2{%
  \vskip\abovecaptionskip
  \sbox\@tempboxa{\small #1: #2}%
  \ifdim \wd\@tempboxa >\hsize
    \small #1: #2\par
  \else
    \global \@minipagefalse
    \hb@xt@\hsize{\hfil\box\@tempboxa\hfil}%
  \fi
  \vskip\belowcaptionskip}
\theoremstyle{plain}
\newtheorem{theorem}{\textbf{Theorem}}[section]
\newtheorem{definition}[theorem]{\textbf{Definition}}
\newtheorem{corollary}[theorem]{\textbf{Corollary}}
\newtheorem{lemma}[theorem]{\textbf{Lemma}}
\newtheorem*{supermaintheorem*}{Main Theorem}
\newtheorem*{supermaincorollary*}{Main Corollary}
\newtheorem{question}{Question}
\newtheorem{conjecture_new}[question]{Conjecture}
\newcommand{\Rspace}        {\mm{{\mathbb R}}}
\newcommand{\Fcal}          {\mm{{\mathcal F}}}
\newcommand{\Delaunay}[2]   {\mm{{\rm Del}_{#1}{({#2})}}}
\newcommand{\Label}[1]      {\mm{[{#1}]}}
\newcommand{\Star}[1]       {\mm{{\rm st}({#1})}}
\newcommand{\white}[1]      {\mm{{\rm wh}({#1})}}
\newcommand{\White}[1]      {\mm{{\rm White}({#1})}}
\newcommand{\Black}[1]      {\mm{{\rm Black}({#1})}}
\newcommand{\Vector}[1]     {\mm{{\rm Vector}({#1})}}
\newcommand{\card}[1]       {\mm{{\#}{#1}}}
\newcommand{\conv}[1]       {\mm{{\rm conv\,}{#1}}}
\newcommand{\Skip}[1]       {}
\definecolor{blue-green}{rgb}{0.0, 0.87, 0.87}
\begin{document}

\title[Order-2 Delaunay Triangulations Optimize Angles]{Order-2 Delaunay Triangulations Optimize Angles}

\author{Herbert Edelsbrunner}
\address{Herbert Edelsbrunner, IST Austria (Institute of Science and Technology Austria), Kloster\-neu\-burg, Austria}
\email{herbert.edelsbrunner@ist.ac.at}
\thanks{Work by the first and third authors is partially supported by the European Research Council (ERC), grant no.\ 788183, by the Wittgenstein Prize, Austrian Science Fund (FWF), grant no.\ Z 342-N31, and by the DFG Collaborative Research Center TRR 109, Austrian Science Fund (FWF), grant no.\ I 02979-N35.  Work by the second author is partially supported by the Alexander von Humboldt Foundation}

\author{Alexey~Garber}
\address{Alexey~Garber, The University of Texas Rio Grande Valley, Bronwsville, TX, USA}
\email{alexey.garber@utrgv.edu}

\author{Morteza Saghafian}
\address{Morteza Saghafian, IST Austria (Institute of Science and Technology Austria), Kloster\-neu\-burg, Austria}
\email{morteza.saghafian@ist.ac.at}

\date{\today}
\subjclass{05B45, 52C20, 68R05}
\keywords{Triangulations, higher order Delaunay triangulations, hypertriangulations, angle vectors, optimality}

\begin{abstract}
The \emph{local angle property} of the (order-$1$) Delaunay triangulations of a generic set in $\Rspace^2$ asserts that the sum of two angles opposite a common edge is less than $\pi$.
  This paper extends this property to higher order and uses it to generalize two classic properties from order-$1$ to order-$2$:
    (1)~among the complete level-$2$ hypertriangulations of a generic point set in $\Rspace^2$, the order-$2$ Delaunay triangulation lexicographically maximizes the sorted angle vector;
    (2)~among the maximal level-$2$ hypertriangulations of a generic point set in $\Rspace^2$, the order-$2$ Delaunay triangulation is the only one that has the local angle property.
  We also use our method of establishing (2) to give a new short proof of the angle vector optimality for the (order-1) Delaunay triangulation.
  For order-$1$, both properties have been instrumental in numerous applications of Delaunay triangulations, and we expect that their generalization will make order-$2$ Delaunay triangulations more attractive to applications as well.
\end{abstract}

\maketitle

\section{Introduction}
\label{sec:1}

This paper is motivated by the desire to generalize optimal properties from order-$1$ to higher-order Delaunay triangulations.
The classic \emph{(order-$1$) Delaunay triangulation} (also called \emph{Delaunay mosaic}) of a finite point set was introduced in 1934 by Boris Delaunay (also Delone).
It is the edge-to-edge tiling whose polygons satisfy the \emph{empty circle criterion} \cite{Del34}: each polygon is inscribed in a circle and all other points lie strictly outside this circle.
In the henceforth considered generic case, all polygons are triangles.
The criterion implies that for an edge shared by two triangles, the sum of the two angles opposite to the edge is less than $\pi$.
If a triangulation satisfies this criterion for every edge shared by two triangles, then we say the triangulation has the \emph{local angle property}.
Recognizing the potential of this type of triangulation for applications, Lawson in 1977 turned the empty circle criterion into an iterative algorithm that converts any triangulation of a given set of $n$ points in $\Rspace^2$ into the Delaunay triangulation using at most $O(n^2)$ edge-flips \cite{Law77}.
The correctness of this algorithm implies that the Delaunay triangulation is the only triangulation of the given set that has the local angle property.
Using Lawson's algorithm as a proof technique, Sibson proved in 1978 that among all triangulations of a finite generic point set in $\Rspace^2$, the Delaunay triangulation lexicographically maximizes the vector whose components are the angles inside the triangles sorted in non-decreasing order \cite{Sib78}.
We call this the \emph{sorted angle vector} of the triangulation.

\smallskip
The dual approach to the same topic predates the invention of the Delaunay triangulation.
In 1908, Georgy Voronoi published seminal papers on what today is called the \emph{Voronoi tessellation} \cite{Vor08}.
Given a finite set in $\Rspace^2$, this tessellation contains a (convex) region for each point in the set, such that the points in the region are at least as close to the generating point as to any other point in the set.
The Delaunay triangulation and the Voronoi tessellation of the same points are dual to each other: there is an incidence-preserving dimension-reversing bijection between the regions, edges, vertices of the tessellation and the vertices, edges, polygons of the triangulation.

In the mid 1970s, Shamos and Hoey~\cite{ShHo75} and Fejes T\'oth~\cite{Fej76} independently generalized this concept to the \emph{order-$k$ Voronoi tessellation}, which contains a (possibly empty) region for each subset of size $k$, such that the points in the region are at least as close to each one of the $k$ defining points as to any of the $n-k$ other points.
In 1982, Lee~\cite{Lee82} gave an incremental algorithm for computing these tessellations, and in 1990, Aurenhammer~\cite{Aur90} showed that there is a natural dual, which we refer to as the \emph{order-$k$ Delaunay triangulation}: each vertex is the average of a collection of $k$ points with non-empty region, and the triangles are formed by connecting two vertices with a straight edge if the corresponding two regions share an edge in the order-$k$ Voronoi tessellation.
The special case in which $k = n-1$ is closely related to the \emph{farthest-point Delaunay triangulation}: its vertices are the extreme points of the set (the convex hull vertices), and two vertices are connected by a straight edge if the regions in the order-$(n-1)$ Voronoi tessellation that correspond to the complementary $n-1$ points of the two vertices share a common edge.
In 1992, Eppstein~\cite{Epp92} proved an extension of Sibson's result: among all triangulations of the convex hull vertices, the farthest-point Delaunay triangulation lexicographically minimizes the sorted angle vector.

\smallskip
With the exception of Eppstein's result---which is specific to the farthest-point Delaunay triangulation---there is a paucity of optimality properties known for higher-order Delaunay triangulations, which we end with three inter-related contributions:
\begin{itemize}
  \item[I:] we extend the local angle property from order-$1$ to order-$k$, for $1 \leq k \leq n-1$, and show that the order-$k$ Delaunay triangulation has this property;
  \item[II:] we prove that among all complete level-$2$ hypertriangulations of a finite generic set in $\Rspace^2$, the order-$2$ Delaunay triangulation lexicographically maximizes the sorted angle vector;
  \item[III:] we show that among all maximal level-$2$ hypertriangulations of a finite generic set in $\Rspace^2$, the order-$2$ Delaunay triangulation is the only one that has the local angle property.
\end{itemize}
For ordinary triangulations, the proofs of the properties analogous to II and III follow from the existence of a sequence of edge-flips that connects any initial (complete) triangulation to the (order-$1$) Delaunay triangulation, such that every flip lexicographically increases the sorted angle vector.
While the level-$2$ hypertriangulations are connected by flips introduced in \cite{EGGHS23}, there are cases in which every connecting sequence contains flips that lexicographically decrease the sorted angle vector; see Section \ref{sec:6}.
Without this tool at hand, the relation between the local angle property and the sorted angle vectors is unclear, and the proofs of Properties~II and III fall back to an exhaustive analysis of elementary geometric cases.

\smallskip
This paper is organized as follows.
Section~\ref{sec:2} provides information on the main background, including level-$k$ hypertriangulations (maximal, complete, and otherwise) and the aging function.
Section~\ref{sec:3} introduces our extension of the local angle property to order $k$ and, in Theorem~\ref{thm:order-k_Delaunay_triangulations_have_lap}, shows that the order-$k$ Delaunay triangulation has this property.
Section~\ref{sec:4} proves Property~II in Theorem~\ref{thm:angle_vector_optimality} and, in Theorem~\ref{thm:sibson}, gives a new short proof of Sibson's theorem  on angle vector optimality for the order-1 Delaunay triangulation \cite{Sib78}. 
It also discusses possible extensions to the class of maximal level-$2$ hypertriangulations and to levels beyond $2$.
Section~\ref{sec:5} proves Property~III in Theorem~\ref{thm:level-2_hypertriangulations_have_lap}, which extends it to order-$3$ for points in convex position in Theorem~\ref{thm:level-3_hypertriangulations_and_convex_position}.
Finally, Section~\ref{sec:6} concludes the paper with discussions of open questions and conjectures related to the geometry and combinatorics of Delaunay and more general hypertriangulations.

\section{Background}
\label{sec:2}

We follow the standard approach to points in general position used in the literature:  a finite set, $A \subseteq \Rspace^2$, is \emph{generic} if no three points are colinear and no four points are cocircular.

\subsection{Triangulations and Hypertriangulations}
\label{sec:2.1}

We first define the families of all triangulations and hypertriangulations of $A$, which include the order-$1$ and order-$k$ Delaunay triangulations discussed in Section~\ref{sec:3}.
We write $\conv{A}$ for the convex hull of the set $A$.
\begin{definition}[Triangulations]
  \label{def:triangulations}
  A \emph{triangulation}, $P$, of a finite $A \subseteq \Rspace^2$ is an edge-to-edge subdivision of $\conv{A}$ into triangles whose vertices are points in $A$.
  It is usually identified with the set of its triangles, so we write $P = \{t_1, t_2, \ldots, t_m\}$.
  The triangulation is \emph{complete} if every point of $A$ is a vertex of at least one triangle, \emph{partial} if it is not complete, and \emph{maximal} if there is no other triangulation of the same points that subdivides it.
\end{definition}
It is easy to see that a triangulation is maximal iff it is complete.
We nevertheless introduce both concepts because they generalize to different notions for hypertriangulations, which we introduce next.
For a set of $k$ points, $I$, we write $\Label{I} = \frac{1}{k} \sum_{x \in I} x$ for the average of the points and, assuming $a \not\in I$ and $J \cap I = \emptyset$, we write $\Label{Ia}$ and $\Label{IJ}$ for the averages of $I \cup \{a\}$ and $I \cup J$, respectively.
While $\Label{I}$ is a point, we sometimes think of it as the set $I$, in which case we call it a \emph{label}.
\begin{definition}[Hypertriangulations \cite{EGGHS23}]
  \label{def:hypertriangulations}
  Let $A \subseteq \Rspace^2$ be generic, $n = \card{A}$, $k$ an integer between $1$ and $n-1$, and $A^{(k)} = \{ \Label{I} \mid I \subseteq A, \card{I} = k \}$ the set of $k$-fold averages of the points in $A$.
  A \emph{level-$k$ hypertriangulation} of $A$ is a possibly partial triangulation of $A^{(k)}$ such that every edge with endpoints $\Label{I}$ and $\Label{J}$ satisfies $\card{(I \cap J)} = k-1$.
\end{definition}
See Figure~\ref{fig:complete-maximal} for two level-$2$ hypertriangulations of five points.
Observe that every triangulation of $A$ is a level-$1$ hypertriangulation of $A$, and vice versa, but for $k > 1$, only a subset of the triangulations of $A^{(k)}$ are level-$k$ hypertriangulations of $A$.
Note also that it is possible that a point can be written as the average of more than one subset of $k$ points in $A$: for example, the center of a square is the $2$-fold average of two pairs of diagonally opposite vertices.
If a level-$k$ hypertriangulation uses such a point as a vertex, then it can use only one of the possible labels.

\smallskip
An alternative approach to these concepts is via induced subdivisions; see \cite[Chapter 9]{Zie95} for details, including the definitions of induced subdivisions and tight subdivisions.
According to this approach, a triangulation of $A = \{a_1, a_2, \ldots, a_n\}$ is a tight subdivision of $\conv{A}$ induced by the projection $\pi \colon \Delta_n \to \Rspace^2$, in which $\Delta_n = \conv{\{e_1, e_2, \ldots, e_n\}} \subseteq \Rspace^n$ is the standard $(n-1)$-simplex, and $\pi(e_i) = a_i$, for $i=1, 2, \ldots, n$.
To generalize, Olarte and Santos~\cite{OlSa21} use the level-$k$ hypersimplex, $\Delta_n^{(k)}$, which is the convex hull of the $k$-fold averages of the $e_i$ in $\Rspace^n$, and define a level-$k$ hypertriangulation of $A$ as a tight subdivision of $A^{(k)}$ induced by the same projection $\pi$ restricted to $\Delta_n^{(k)}$.
In this setting, the constraint to use only one label for each vertex is implicit.

\subsection{The Aging Function}
\label{sec:2.2}

A triangle in a level-$k$ hypertriangulation can be classified into two types.
Letting $\Label{I}, \Label{J}, \Label{K}$ be its vertices, each the average of $k$ points, we say the triangle is
\begin{itemize}
  \item \emph{black}, if $\card{(I \cap J \cap K)} = k-2$;
  \item \emph{white}, if $\card{(I \cap J \cap K)} = k-1$.
\end{itemize}
In other words, vertices of black triangles are labeled $\Label{X\!ab}, \Label{X\!ac}, \Label{X\!bc}$, for some $X$ of size $k-2$, and vertices of white triangles are labeled $\Label{Y\!a}, \Label{Y\!b}, \Label{Y\!c}$, for some $Y$ of size $k-1$.
Our next definition allows for transformations from white to black triangles.
\begin{definition}[Aging Function]
  \label{def:aging_function}
  Letting $t$ be a white triangle with vertices $\Label{Y\!a}, \Label{Y\!b}, \Label{Y\!c}$, the \emph{aging function} maps $t$ to the black triangle, $F(t)$, with vertices $\Label{Y\!ab}, \Label{Y\!ac}, \Label{Y\!bc}$.
\end{definition}
For example, the black triangles in the level-$2$ hypertriangulations in Figure~\ref{fig:complete-maximal} are obtained by aging the triangles in a complete triangulation on the left and a partial triangulation on the right.
The aging function increases the level of the triangle by one, hence the name.
Correspondingly, the inverse aging function maps a black triangle to a white triangle one level lower.

\smallskip
To extend this definition to hypertriangulations, we say a level-$k$ hypertriangulation, $P_k$, \emph{ages} to a level-$(k+1)$ hypertriangulation, $P_{k+1}$, denoted $P_{k+1} = F(P_k)$,
if the aging function defines a bijection between the white triangles in $P_k$ and the black triangles in $P_{k+1}$.
Note however that the aging of $P_k$ is not unique as it says nothing about the white triangles of $P_{k+1}$.
This notion is useful to obtain structural results for the family of all level-$k$ hypertriangulations.
For example, \cite{EGGHS23} has shown that every level-$2$ hypertriangulation is an aging of a level-$1$ hypertriangulation.
For the special case in which the points are in convex position, \cite{Gal18} has extended this result to all levels, $k$.
However, for points in possibly non-convex position, there are obstacles to applying the aging function. 
An example of a level-$2$ hypertriangulation, $P_2$, for which $F(P_2)$ does not exist is given in \cite{EGGHS23, OlSa21}.
\begin{figure}[hbt]
  \centering
  \vspace{0.1in}
  \resizebox{!}{1.8in}{\input{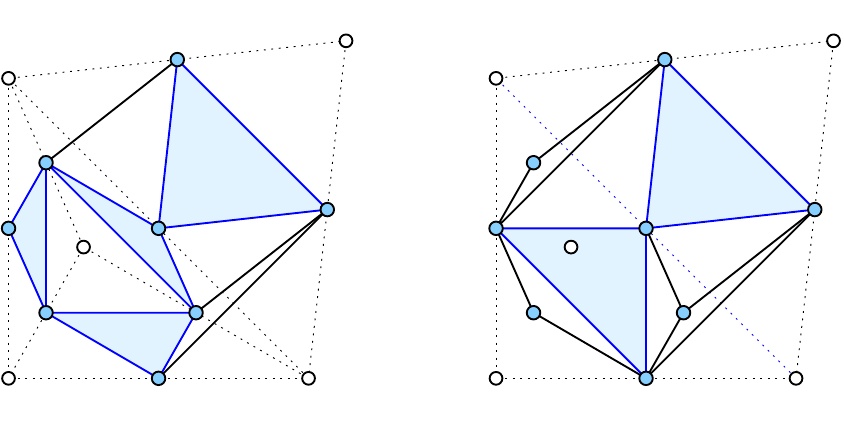_t}}
  \vspace{-0.05in}
  \caption{Two level-$2$ hypertriangulations (drawn with \emph{solid} segments) obtained by aging the triangulations (drawn with \emph{dotted} segments) of the five \emph{white} points, $a,b,c,d,e$, with four triangles on the \emph{left} and two triangles on the \emph{right}.
  Black triangles are \emph{shaded} and white triangles are just \emph{white}.
  The total number of triangles is the same in both hypertriangulations.}
  \label{fig:complete-maximal}
\end{figure}

\smallskip
For later reference, we compile several results about the relation between level-$1$ and level-$2$ hypertriangulations obtained in \cite{EGGHS23}.
Given a vertex, $x$, in a triangulation, $P$, we define the \emph{star} of $x$ as the union of triangles that share $x$, denoted $\Star{P,x}$, and shrinking the star by a factor two toward $x$, we get $\Label{\Star{P,x},x} = \frac{1}{2} (\Star{P,x} + x)$, which is the set of midpoints between $x$ and any point $y \in \Star{P,x}$.
Observe that the shrunken star is contained in $\conv{A^{(2)}}$ iff $x$ is an interior vertex of $P$.
Indeed, $x$ necessarily belongs to the shrunken star, but if $x$ is a convex hull vertex, then $x$ lies outside $\conv{A^{(2)}}$.
The first two statements of the following lemma are Lemmas~4.1 and 4.3 in \cite{EGGHS23}, while the last statement is a reformulation of Lemma~4.2 in \cite{EGGHS23}.
We refer to \cite[Section~4]{EGGHS23} for proofs and additional details.

\begin{lemma}[Aging Function for Triangulations]
  \label{lem:aging_function_for_triangulations}
  Let $A \subseteq \Rspace^2$ be finite and generic, and recall that every level-$1$ hypertriangulation is a triangulation.
  \begin{itemize}
    \item For every level-$1$ hypertriangulation, $P$, of $A$, there exists a level-$2$ hypertriangulation, $P_2$, such that $P_2 = F(P)$.
    \item For every level-$2$ hypertriangulation, $P_2$, of $A$, there exists unique level-$1$ hypertriangulation, $P$, such that $P_2 = F(P)$.
    \item If $P_2 = F(P)$ and $x \in A$ is a vertex of $P$, then the union of white triangles in $P_2$ that have $x$ in all their vertex labels is $\Label{\Star{P,x},x} \cap \conv{A^{(2)}}$.
  \end{itemize}
\end{lemma}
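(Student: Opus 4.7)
My plan is to prove the three bullets of the lemma in order, by exhibiting an explicit construction of $P_2$ from $P$, inverting it to recover $P$ from $P_2$, and reading off the geometric identification as a byproduct.

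First I would construct $P_2$ from $P$. For every triangle $T = abc \in P$, put the medial triangle $\Label{ab}\Label{ac}\Label{bc} = F(T)$ as a black triangle of $P_2$. These cover a central portion inside each $T$ and already satisfy the edge condition $\card{(I \cap J)} = 1$ along shared edges such as $\Label{ab}\Label{ac}$. For every vertex $x$ of $P$ with cyclic neighbors $a_1, \dots, a_m$, the points $\Label{xa_1}, \dots, \Label{xa_m}$ are the vertices of the shrunken link polygon bounding $\Label{\Star{P,x},x}$. I would triangulate this polygon (intersected with $\conv{A^{(2)}}$) arbitrarily, producing white triangles $\Label{xa_i}\Label{xa_j}\Label{xa_k}$ with $x$ in every label. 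A verification that inspects each triangle of $P$ confirms that the three corners of $T$ contributed by the shrunken stars at $a, b, c$, together with the medial black triangle in the middle, tile $T \cap \conv{A^{(2)}}$ edge-to-edge, so $P_2$ is a valid level-$2$ hypertriangulation.

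Next I would invert the construction to prove the second bullet. Given $P_2$, define $P$ to be the set of triangles $abc$ of $A$ such that $\Label{ab}\Label{ac}\Label{bc}$ is a black triangle of $P_2$. Uniqueness is immediate: $F$ is an injection on triangles, so any two triangulations with the same image under $F$ must coincide. For existence I would argue that these $abc$ actually tile $\conv{A}$: two black triangles of $P_2$ sharing a level-$2$ edge $\Label{ab}\Label{ac}$ have parents sharing the edge $ab$, and iterating this shared-edge correspondence across $P_2$ traces out a consistent planar triangulation of $\conv{A}$. The third bullet then follows from the first construction: the white triangles in $P_2$ with $x$ in every label are precisely those triangulating the shrunken link polygon around $x$, whose interior is $\Label{\Star{P,x},x} \cap \conv{A^{(2)}}$; conversely, a white triangle with $x$ in every vertex label must have its three other indices among the neighbors of $x$ in $P$, placing it inside the same polygon.

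The main obstacle is the existence half of the second bullet: showing that the black triangles of an arbitrary $P_2$ really do assemble into a triangulation of $A$, rather than into an overlapping or partial configuration. This is where the edge constraint $\card{(I \cap J)} = k-1$ from Definition~\ref{def:hypertriangulations} does serious global combinatorial work, and I would invoke the detailed analysis of \cite{EGGHS23} to complete this step, restricting my own write-up to a sketch of why the ``black skeleton'' of any level-$2$ hypertriangulation inherits the requisite triangulation structure.
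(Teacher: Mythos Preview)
The paper does not prove this lemma at all: it is stated as a compilation of results ``obtained in \cite{EGGHS23}'' and left without argument. In that sense your proposal already goes further than the paper does, and your overall architecture (build $P_2$ from medial black triangles plus triangulated shrunken stars; read $P$ back off the black triangles; identify the white region of $x$ with the shrunken star) is the right picture and matches how \cite{EGGHS23} is used elsewhere in the paper.

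There is, however, a concrete error in your sketch of the existence half of the second bullet. You write that ``two black triangles of $P_2$ sharing a level-$2$ edge $\Label{ab}\Label{ac}$ have parents sharing the edge $ab$,'' and you build your tiling argument on propagating this adjacency. But at level~$2$ two distinct black triangles can \emph{never} share an edge: if a black triangle has the edge $\Label{ab}\Label{ac}$, its third vertex is forced to be $\Label{bc}$ (any other label $K$ with $\card{K \cap \{a,b\}}=\card{K \cap \{a,c\}}=1$ and $a\notin K$ must equal $\{b,c\}$). The paper itself uses exactly this fact later, noting that \textsc{(bb)} is void at level~$2$. So the ``shared-edge correspondence'' you describe is vacuous, and the mechanism by which the preimages $abc$ of the black triangles are glued into a bona fide triangulation of $\conv{A}$ must instead pass through the white regions that separate them. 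This is precisely the global combinatorial work you flag as the main obstacle, and it is not bypassed by the local observation you wrote down.

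Since you already plan to invoke \cite{EGGHS23} for this step, the fix is simply to drop the incorrect black--black adjacency sentence and state honestly that the assembly of $F^{-1}(\Black{P_2})$ into a triangulation of $A$ is the substantive content imported from that reference. Your treatment of the first and third bullets is adequate for a sketch, though for the third bullet you should make explicit that the identification holds for \emph{every} $P_2$ aging from $P$ (not just the one you constructed), because the black triangles---and hence the complementary white regions---are determined by $P$ alone.
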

Since $\Label{\Star{P,x},x} \cap \conv{A^{(2)}} \neq \Label{\Star{P,x},x}$ iff $x$ is a convex hull vertex, the third claim implies that for each interior vertex, $x$, scaled versions of the mentioned white triangles in $P_2$ tile the star of $x$ in $P$.

\subsection{Maximal and Complete Hypertriangulations}
\label{sec:2.3}

The Delaunay triangulation of a finite set is optimal among all complete triangulations, but not necessarily among the larger family of possibly partial triangulations of the set.
In this section, we introduce two families of level-$2$ hypertriangulations to which we compare the order-$2$ Delaunay triangulation.
\begin{definition}[Complete and Maximal Level-2 Hypertriangulations]
  \label{def:complete_and_maximal-level-2_hypertriangulations}
  Let $A \subseteq \Rspace^2$ be finite and generic.
  A level-$2$ hypertriangulation of $A$ is \emph{complete} if its black triangles are the images under the aging function of the triangles in a complete triangulation of $A$, and it is \emph{maximal} if no other level-$2$ hypertriangulation subdivides it.
\end{definition}
For example, the level-$2$ hypertriangulations in Figure~\ref{fig:complete-maximal} are both maximal, but only the one on the left is complete.
The notion of maximality extends to level-$k$ hypertriangulations for any relevant $k$. 
The situation with completeness is more subtle as it relies on the existence of compatible triangulations to which we can apply the aging function, and there are counterexamples to the existence from level $2$ to level $3$; see Figure~8 in \cite{EGGHS23}---which is based on Example~5.1 in \cite{OlSa21}---for a level-2 hypertriangulation with overlapping black triangles in level 3.

For $k=1$, a triangulation of a finite and generic set is complete iff it is maximal.
An easy way to see this is by counting the triangles in a possibly partial triangulation of $A \subseteq \Rspace^2$.
Write $H \subseteq A$ for the vertices of the convex hull of $A$, and set $n = \card{A}$ and $h = \card{H}$.
The vertex set of a partial triangulation can be any subset of $A$ that contains all points in $H$.
Let $m$ be the number of extra points, so the triangulation has $m+h$ vertices.
We can add $h-3$ (curved) edges to turn the triangulation into a maximally connected planar graph, which has $3(m+h)-6$ edges and $2(m+h)-4$ faces, including the outside.
Hence, the triangulation has $3(m+h)-6-(h-3) = 3m+2h-3$ edges and $2(m+h)-4 -(h-2) = 2m+h-2$ triangles.
For a complete triangulation, we have $m = n-h$ and therefore $2n-h-2$ triangles.
If a triangulation has fewer than this number, then its vertex set misses at least one point, which we can add by subdivision.
Hence, the triangulation is complete iff it is maximal.
The situation is slightly more complicated for level-$2$ hypertriangulations.
\begin{lemma}[Complete Implies Maximal]
  \label{lem:complete_implies_maximal}
  Let $A \subseteq \Rspace^2$ be finite and generic.
  Then any two maximal level-$2$ hypertriangulations have the same number of triangles, and every complete level-$2$ hypertriangulation is maximal.
\end{lemma}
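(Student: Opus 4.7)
The plan is to apply Lemma~\ref{lem:aging_function_for_triangulations}: every level-$2$ hypertriangulation $P_2$ has a unique underlying triangulation $P$ with $F(P) = P_2$, the black triangles of $P_2$ are in bijection with the triangles of $P$, and for each vertex $x$ of $P$, the white triangles of $P_2$ at $x$ tile $\Label{\Star{P,x},x} \cap \conv{A^{(2)}}$. The edge constraint $\card{(I \cap J)} = 1$ additionally forces each such white triangle to have all three vertex labels of the form $\Label{xy}$, so these triangles triangulate the tile using only midpoints on its boundary; consequently their number depends only on the local combinatorics of $P$ around $x$ together with the geometry of $\conv{A^{(2)}}$, not on the specific choice of white triangulation.

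For the first claim, I will show that maximality of $P_2$ forces $P$ to be a complete triangulation of $A$. If some $a \in A$ were missing from the vertex set of $P$, then genericity places $a$ in the interior of some triangle $T = bcd \in P$; subdividing $T$ at $a$ yields a refinement $P' \supsetneq P$, and by a careful local modification of the white triangulations around $T$ one constructs a valid aging $F(P')$ that strictly subdivides $F(P) = P_2$, contradicting maximality. Once $P$ is known to be complete, the count $|P_2| = |P| + W(P_2)$ can be evaluated: $|P| = 2n - h - 2$ is the standard triangulation count, and the total white count is obtained by summing the local contributions. By a handshake-type identity (every edge of $P$ supplies one boundary midpoint to each of the two adjacent shrunken stars, and midpoints on $\partial \conv{A^{(2)}}$ contribute a correction term determined by $A$ alone), the sum depends only on $n$, $h$, and the convex-hull structure of $A$, not on the particular $P$. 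Therefore any two maximal level-$2$ hypertriangulations have the same number of triangles.

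For the second claim, let $P$ be complete and suppose for contradiction that $P_2' \supsetneq F(P)$ is a strict refinement. Then $P_2'$ introduces a new vertex $v = \Label{ef}$ in the interior of some triangle of $F(P)$. If the host triangle is black with labels $\Label{ab}, \Label{ac}, \Label{bc}$, the edge constraint applied to the three new edges forces $\card{(\{e,f\} \cap \{a,b\})} = \card{(\{e,f\} \cap \{a,c\})} = \card{(\{e,f\} \cap \{b,c\})} = 1$, which has no solution with $\{e,f\}$ distinct from $\{a,b\}$, $\{a,c\}$, $\{b,c\}$. If the host triangle is white at $x$ with labels $\Label{xa}, \Label{xb}, \Label{xc}$, the analogous analysis forces $v = \Label{xf}$ for some $f \in A$ that is not a neighbor of $x$ in $P$; completeness of $P$ then ensures $f \notin \Star{P,x}$, so $v = (x + f)/2 \notin \Label{\Star{P,x},x}$ and $v$ cannot lie in the host triangle, a contradiction. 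The main obstacle, I expect, is the maximality-implies-completeness step, since aging is not a monotone operation with respect to refinement of $P$: producing the strict subdivision $F(P') \supsetneq F(P)$ requires engineering the white triangulations near the inserted point so that the resulting level-$2$ complex is simultaneously a valid hypertriangulation and a strict refinement of $P_2$.
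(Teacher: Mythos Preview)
Your first-claim argument has a fatal gap. You assert that maximality of $P_2$ forces the underlying $P$ to be complete, by constructing an aging $F(P')$ that strictly subdivides $F(P)=P_2$ when $P'$ refines $P$ at a missing point $a$ inside some $bcd\in P$. This construction cannot exist: the black triangle of $F(P)$ coming from $bcd$ has vertices $\Label{bc},\Label{bd},\Label{cd}$, whereas $F(P')$ replaces it by the three black triangles on $\{\Label{ab},\Label{ac},\Label{bc}\}$, $\{\Label{ab},\Label{ad},\Label{bd}\}$, $\{\Label{ac},\Label{ad},\Label{cd}\}$, and these do \emph{not} tile the original black triangle. In fact your own second-claim analysis shows that a black triangle admits no level-$2$ subdivision whatsoever, so any level-$2$ subdivision of $P_2$ must keep exactly the same black triangles and hence the same underlying $P$. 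Maximal $P_2$ with partial $P$ therefore do exist: if $a$ is missing from $P$ and lies inside $bcd$, then $\Label{ab}$ lies strictly inside the white region $\Label{\Star{P,b},b}\cap\conv{A^{(2)}}$, and a maximal $P_2$ simply uses $\Label{ab}$ (and likewise $\Label{ac},\Label{ad}$) as an interior vertex there.

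The paper's proof avoids this by a direct count that allows partial $P$. With $h$ convex-hull vertices and $m$ interior vertices in $P$, there are $2m+h-2$ black triangles; after a boundary modification that depends only on $A$, the white regions contribute $4m+3h-6$ triangles; and each of the $n-h-m$ missing points yields three interior vertices in white regions, adding $6(n-h-m)$ more. The grand total $6n-2h-8$ (up to the fixed boundary correction) is independent of $m$, which gives the first claim directly. The second claim then follows because a complete $P_2$ already realizes this count and hence cannot be strictly refined. Your direct argument for the second claim is essentially sound---modulo handling a new vertex on an edge (ruled out by genericity) and arguing via the triangle of $P_2'$ incident to a boundary edge rather than assuming a single interior vertex connected to all three corners---but it does not rescue the first claim.
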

\begin{proof}
  As before, let $H\subseteq A$ be the vertices of the convex hull of $A$.
  To prove the first claim, let $n = \card{A}$, $h = \card{H}$, and consider a level-$2$ hypertriangulation, $P_2$, aged from a possibly partial triangulation, $P$, with $m+h \leq n$ vertices.
  Note that $P$ has $2m+h-2$ triangles, so $P_2$ has the same number of black triangles.

  \smallskip
  To count the white triangles in $P_2$, we recall that each white region corresponds to the star of a vertex of $P$.
  If $a$ is a vertex in the interior of $\conv{A}$, then the white region is the shrunken star, $\Label{\Star{P,a},a}$.
  We modify $P_2$ so this is also true for each vertex, $b$, of $\conv{A}$.
  To this end, we consider all boundary edges of $P_2$ that connect vertices $a' = \Label{ba}$ and $c' = \Label{bc}$, and add the triangle $a'bc'$ to $P_2$.
  The number of thus added triangles depends on the convex hull of the midpoints of pairs but not on how this convex hull is decomposed into triangles.
  The benefit of this modification is that we now have exactly $m+h$ white regions, each a star-convex polygon, and each edge of $P$ contributes a vertex to exactly two of the white regions.
  Not forgetting the $h$ vertices added during the modification, this implies that the total number of edges of the $m+h$ white regions is $2 (3m+2h-3) + h = 6m+5h-6$.
  Every triangulation of a $j$-gon has $j-2$ triangles, so the total number of triangles in the white regions is $(6m+5h-6) - 2(m+h) = 4m+3h-6$.

  \smallskip
  We now turn our attention to the $n-h-m$ points of $A$ that are not vertices of $P$.
  Let $x$ be such a point and $abc$ the triangle in $P$ that contains $x$ in its interior.
  Hence, $\Label{xa}$ lies in the interior of $\Label{\Star{P,a},a}$, and similarly for $b$ and $c$.
  To maximally subdivide $P_2$, we thus add $3 (n-h-m)$ points in the interiors of the white regions, which increases the number of white triangles to $(4m+3h-6) + 6(n-h-m) = 6n-2m-3h-6$.
  Adding to this the $2m+h-2$ black triangles, we get a total of $6n-2h-8$ triangles.
  To get the number of triangles in this maximal triangulation, we still need to correct for the triangles added during the initial modification of $P_2$.
  But their number does not depend on $m$, so neither does the final triangle count.
  Hence, all maximal level-$2$ hypertriangulations of $A$ have the same number of triangles.

  \smallskip
  To get the second claim, observe that we have $m=0$ whenever $P_2$ is complete.
  Hence, we get the same number of triangles as just calculated, but without subdivision.
  It follows that $P_2$ is maximal.
\end{proof}

\section{The Local Angle Property}
\label{sec:3}

In this section, we define order-$k$ Delaunay triangulations as special level-$k$ hypertriangulations, introduce the local angle property for level-$k$ hypertriangulations, and show that the order-$k$ Delaunay triangulations have the local angle property.
This property specializes to the standard local angle property that characterizes (order-1) Delaunay triangulations as well as their constrained versions.

\subsection{Higher Order Delaunay Triangulations}
\label{sec:3.1}

We introduce the order-$k$ Delaunay triangulation of a finite set as a special level-$k$ hypertriangulation of this set; but see \cite{Aur90} for a more geometric definition.
\begin{definition}[Order-$k$ Delaunay Triangulation]
  \label{def:order-k_Delaunay_triangulation}
  Let $A \subseteq \Rspace^2$ be finite and generic, and $k$ an integer between $1$ and $\card{A}-1$.
  We construct a particular level-$k$ hypertriangulation of $A$:
  \begin{itemize}
    \item a black triangle with vertices $[Xab],[Xac],[Xbc]$ belongs to this hypertriangulation if $X \subseteq A$ is the set of points inside the circumcircle of $abc$, and $\card{X} = k-2$;
    \item a white triangle with vertices $[Ya],[Yb],[Yc]$ belongs to this hypertriangulation if $Y \subseteq A$ is the set of points inside the circumcircle of $abc$, and $\card{Y} = k-1$.
  \end{itemize}
  This hypertriangulation is called the \emph{order-$k$ Delaunay triangulation of $A$} and denoted $\Delaunay{k}{A}$.
\end{definition}
While it may not be obvious that the above triangles form a triangulation of $A^{(k)}$, it can be seen, for example, by lifting the points of $A$ onto a paraboloid in $\Rspace^3$, and then considering the lower surface of the convex hull of the $k$-fold averages, which project to the points in $A^{(k)}$.
Another way to construct $\Delaunay{k}{A}$ is from the dual order-$k$ Voronoi tessellation, as illustrated for $k=2$ in Figure~\ref{fig:Voronoi2}.
\begin{figure}[hbt]
  \centering
  \vspace{0.0in}
  \resizebox{!}{2.0in}{\input{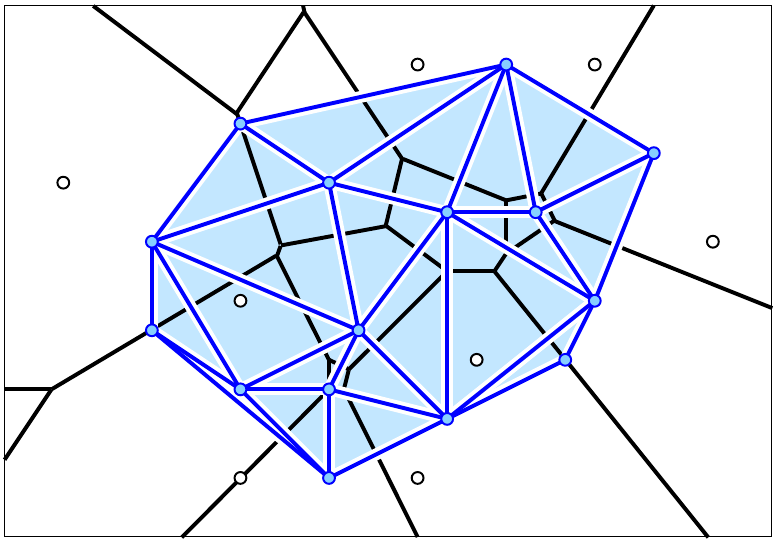_t}}
  \caption{The (\emph{blue}) order-$2$ Delaunay triangulation drawn on top of the (\emph{black}) order-$2$ Voronoi tessellation of the set $A = \{a,b,\ldots,h\}$.
  Not all parts of the order-$2$ Voronoi tessellation are visible in the rectangular window.}
  \label{fig:Voronoi2}
\end{figure}

\smallskip
Note that for $k=1$, we get precisely the Delaunay triangulation of $A$, as all triangles are white and satisfy the empty circle criterion.
For $k = \card{A}-1$, we get the (scaled and centrally inverted copy of) the farthest-point Delaunay triangulation \cite{Epp92}.
Each of its triangles is black, and every point of $A$ is either a vertex or inside the circumcircle of the triangle.
Moreover, the aging function applies, and we have $\Delaunay{k+1}{A} = F(\Delaunay{k}{A})$ for every $1 \leq k < \card{A}-1$.

\subsection{Angles of Black and White Triangles}
\label{sec:3.2}

We now generalize the local angle property from order-$1$ to order-$k$.
For $2 \leq k \leq \card{A}-2$, we have black as well as white triangles.
Hence, there are three types of interior edges: those shared by two white triangles, two black triangles, and a white and a black triangle.
We have a different condition for each type.
\begin{definition}[Local Angle Property]
  \label{def:local_angle_property}
  Let $A \subseteq \Rspace^2$ be finite and generic.
  A level-$k$ hypertriangulation of $A$ has the \emph{local angle property} if
  \begin{itemize}
    \item \textsc{(ww)} for every edge shared by two white triangles, the sum of the two angles opposite the edge is at most $\pi$;
    \item \textsc{(bb)} for every edge shared by two black triangles, the sum of the two  angles opposite the edge is at least $\pi$;
    \item \textsc{(bw)} for every edge shared by a black triangle and a white triangle, the angle opposite the edge in the black triangle is bigger than the angle opposite the edge in the white triangle.
  \end{itemize}
\end{definition}
For $k=1$, there are no black triangles, so \textsc{(bb)} and \textsc{(bw)} are void.
Delaunay~\cite{Del34} proved that the local angle property characterizes the (closest-point) Delaunay triangulation among all (complete) triangulations of a finite point set, and this was used by Lawson~\cite{Law77} to construct the triangulation by repeated edge flipping.
Symmetrically, for $k = \card{A}-1$, there are no white triangles, so \textsc{(ww)} and \textsc{(bw)} are void.
Eppstein~\cite{Epp92} proved the local angle property for the (farthest-point) Delaunay triangulation, and the convergence of the flip-algorithm implies that it is the only (not necessarily complete) triangulation of the points that has this property.
The goal of this section is to extend these results to level-$k$ hypertriangulations.

\subsection{All Delaunay Triangulations Have the Local Angle Property}
\label{sec:3.3}

We prove that the Delaunay triangulations of any order have the local angle property.
This extends the results from $k = 1, \card{A}-1$ to any order between these limits.
\begin{theorem}[Order-$k$ Delaunay Triangulations have Local Angle Property]
  \label{thm:order-k_Delaunay_triangulations_have_lap}
  Let $A \subseteq \Rspace^2$ be finite and generic. 
  Then for every integer $1 \leq k \leq \card{A}-1$, the order-$k$ Delaunay triangulation of $A$ has the local angle property.
\end{theorem}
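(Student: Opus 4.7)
The plan is to translate each of the three local angle conditions into an application of the inscribed angle theorem to a triangle whose vertices lie in $A$, exploiting the similarity between hypertriangulation triangles and triangles on points of $A$.

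\emph{Setup.} A white triangle with vertices $[Ya], [Yb], [Yc]$ is the image of $abc$ under the homothety $x\mapsto\tfrac{1}{k}(x+\sum_{y\in Y}y)$, so its angle at $[Ya]$ equals $\angle bac$, and cyclically. For a black triangle with vertices $[Xab], [Xac], [Xbc]$ the pairwise differences $[Xab]-[Xac]=\tfrac{1}{k}(b-c)$, etc., show it is a $1/k$-scaled, point-reflected copy of $abc$; its angle at $[Xrs]$ equals the angle of $abc$ at the missing point of $\{a,b,c\}\setminus\{r,s\}$, so for instance the angle at $[Xbc]$ equals $\angle bac$. Any edge of a level-$k$ hypertriangulation has endpoints $[Lp], [Lq]$ with $\card{L}=k-1$, and a set-theoretic case analysis shows that the third vertex $[K]$ of any triangle on this edge is either $[Lv]$ for some $v\notin L\cup\{p,q\}$ (yielding a white triangle similar to $pqv$) or $[(L\setminus\{u\})pq]$ for some $u\in L$ (yielding a black triangle similar to $upq$). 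Computing the vector from the midpoint of $[Lp][Lq]$ to the third vertex shows that $[Lv]$ lies on the same side of the line through $[Lp][Lq]$ as $v$ does of the parallel line $pq$, while $[(L\setminus\{u\})pq]$ lies on the side opposite to $u$.

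\emph{Case analysis.} Each of \textsc{(ww)}, \textsc{(bb)}, \textsc{(bw)} then reduces to a standard inscribed angle argument over the chord $pq$. For \textsc{(ww)}, two white triangles $[Lp][Lq][Lv]$ and $[Lp][Lq][Lv']$ on opposite sides of the edge force $v$ and $v'$ to opposite sides of $pq$; by Definition~\ref{def:order-k_Delaunay_triangulation}, $L$ is exactly the set of points strictly inside the circumcircles of $pqv$ and $pqv'$, so $v'\notin L$ lies strictly outside the circumcircle of $pqv$, giving $\angle pvq + \angle pv'q < \pi$. For \textsc{(bb)}, the two black triangles are similar to $u_1pq$ and $u_2pq$ for distinct $u_1, u_2\in L$ on opposite sides of $pq$; since $u_i\in L\setminus\{u_j\}$ lies strictly inside the circumcircle of $u_jpq$, the inscribed angle theorem yields $\angle pu_1q + \angle pu_2q > \pi$. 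For \textsc{(bw)}, a white and a black triangle place $v$ and $u$ on the same side of $pq$, and since $u\in L$ lies strictly inside the circumcircle of $pqv$ on the same side as $v$, we get $\angle puq > \angle pvq$.

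\emph{Main obstacle.} There is no single hard step: the geometric engine is the classical Delaunay inscribed-angle argument, repeated three times with attention to the direction of the inequality. The real burden is bookkeeping—matching the two parametrizations of a shared edge by its two triangles, tracking which side of the chord $pq$ each auxiliary point lies on via the displacement computation, and using genericity of $A$ to upgrade each angle inequality to a strict one.
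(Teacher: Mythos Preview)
Your argument is correct and matches the paper's proof almost exactly: both exploit that each white or black triangle is a scaled (possibly reflected) copy of a triangle $abc$ with $a,b,c\in A$, identify the shared edge with a chord $pq$, determine which side of $pq$ the relevant auxiliary points lie on, and then invoke the inscribed angle theorem together with Definition~\ref{def:order-k_Delaunay_triangulation} to obtain each of \textsc{(ww)}, \textsc{(bb)}, \textsc{(bw)}. Your edge parametrization via $L,p,q$ is a bit more uniform than the paper's case-by-case labeling, but the underlying geometry is identical.
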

\begin{proof}
  Recall that white triangles of the order-$k$ Delaunay triangulation of $A$ have vertices $\Label{Y\!a}$, $\Label{Y\!b}$, $\Label{Y\!c}$, in which $Y \subseteq A$ with $\card{Y} = k-1$, such that all points of $Y$ are inside and all other points of $A$ are outside the circumcircle of $abc$.
  Similarly, its black triangles have vertices labeled $\Label{X\!ab}$, $\Label{X\!ac}$, $\Label{X\!bc}$, in which $X \subseteq A$ with $\card{X} = k-2$, such that all points of $X$ are inside and all other points of $A$ are outside this circumcircle. 
  We establish each of the three conditions separately.
  \begin{figure}[hbt]
    \centering
    \vspace{0.0in}
    \resizebox{!}{2.4in}{\input{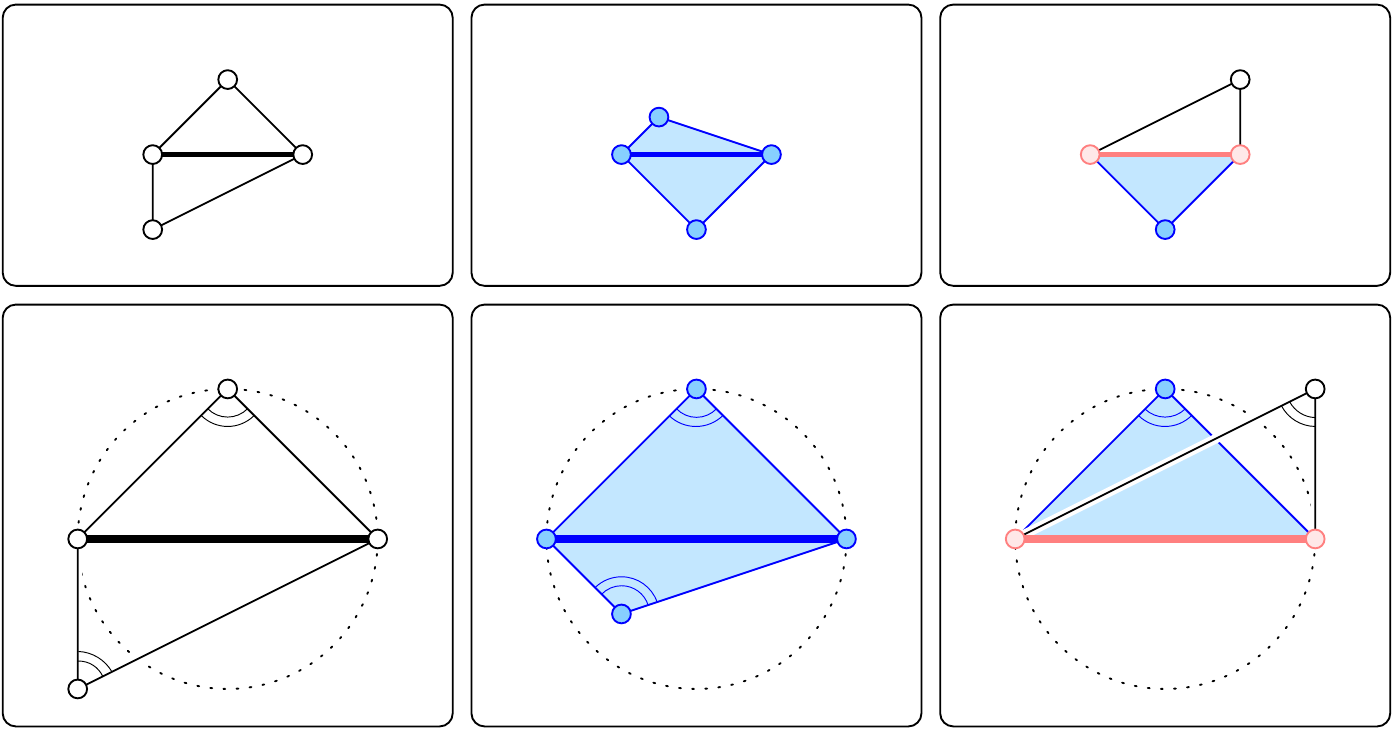_t}}
      \caption{From \emph{left} to \emph{right:} an edge shared by two white triangles, two black triangles, a black triangle and a white triangle.
      \emph{Top row:} the adjacent triangles in the order-$k$ Delaunay triangulation. 
      The vertex labels encode the locations of the vertices as averages of the listed points.
      \emph{Bottom row:} the corresponding triangles spanned by the original points.}
      \label{fig:ww-bb-wb}
  \end{figure}

  \smallskip \noindent \textsc{(ww):}
  Let $\Label{Y\!a}, \Label{Y\!b}, \Label{Y\!c}$ and $\Label{Y\!b}, \Label{Y\!c}, \Label{Y\!d}$ be the vertices of two adjacent white triangles in the order-$k$ Delaunay triangulation of $A$, and note that the points of $Y$ lie inside and $d$ lies outside the circumcircle of $abc$; see the left panel of Figure~\ref{fig:ww-bb-wb}.
  The triangles $abc$ and $bcd$ are homothetic copies of these two white triangles, which implies that $a$ and $d$ lie on opposite sides of $bc$.
  Hence, $\measuredangle bac +\measuredangle bdc <\pi$, because $d$ is outside the circumcircle. 
  \textsc{(ww)} follows.

  \smallskip \noindent \textsc{(bb):} 
  Let $\Label{Z\!abc}$, $\Label{Z\!abd}$, $\Label{Z\!acd}$ and $\Label{Z\!abd}$, $\Label{Z\!acd}$, $\Label{Z\!bcd}$ be the vertices of adjacent black triangles in the order-$k$ Delaunay triangulation of $A$, and note that the points of $Z$ and $d$ lie inside the circumcircle of $abc$; see the middle panel of Figure~\ref{fig:ww-bb-wb}.
  The triangles $bcd$ and $abc$ are homothetic copies of these two black triangles, which implies that $a$ and $d$ are on opposite sides of $bc$.
  Hence, $\measuredangle bac +\measuredangle bdc >\pi$, because $d$ is inside the circumcircle. 
  \textsc{(bb)} follows.

  \smallskip \noindent \textsc{(bw):}
  Let $\Label{X\!ab}$, $\Label{X\!ac}$, $\Label{X\!bc}$ and $\Label{X\!ab}$, $\Label{X\!ac}$, $\Label{X\!ad}$ be the vertices of a black triangle and an adjacent white triangle in the order-$k$ Delaunay triangulation of $A$, and note that the points of $X$ lie inside while $d$ lies outside the circumcircle of $abc$; see the right panel of Figure~\ref{fig:ww-bb-wb}.
  The triangles $abc$ and $bcd$ are homothetic copies of the black and white triangles, with negative and positive homothety coefficients, respectively, which implies that $a$ and $d$ lie on the same side of $bc$.
  Thus, $\measuredangle bac >\measuredangle bdc $, because $d$ is outside the circumcircle. 
  \textsc{(bw)} follows.
\end{proof}
We conjecture that among all level-$k$ hypertriangulations that satisfy the local angle property, the order-$k$ Delaunay triangulation is the only one that maximizes the number of triangles.
For later reference, we refer to this as the \emph{Local Angle Conjecture} for hypertriangulations.
We discuss partial results for it in Section \ref{sec:5} and rigorously formulate it as Conjecture \ref{conj:B} in Section \ref{sec:6}.

\subsection{Constrained Delaunay Triangulations}
\label{sec:3.4}

Given a bounded polygonal region, $R$, it is always possible to find a triangulation, $P$, of its vertices (the endpoints of its edges) that contains all edges of the region.
Hence, every triangle of $P$ lies either completely inside or completely outside the region.
The \emph{restriction} of $P$ to $R$ consists of the triangles inside $R$, and we call this restriction a \emph{triangulation} of $R$.
For some choices of $P$, the restriction to $R$ looks locally like the Delaunay triangulation, namely when every edge that passes through the interior of $R$ satisfies \textsc{(ww)}.
It is not difficult to see that such choices of triangulations exist and that their restriction to $R$ is generically unique:
run Lawson's algorithm on an initial triangulation of $R$, flipping an interior edge whenever the sum of the two opposite angles exceeds $\pi$.
This is the \emph{constrained Delaunay triangulation} of $R$, as introduced in 1989 by Paul Chew \cite{Che89}, but see also \cite{CDS12,HNU99}.
A triangle $uvw$ belongs to this specific triangulation iff it is contained in $R$ and its circumcircle does not enclose any vertex that is visible from all points inside the triangle.
Here, a point $x$ is \emph{visible} from a point $y$, if the segment $xy$ lies inside $R$.
We state a weaker necessary condition for later reference.
\begin{lemma}[Triangles and Edges in Constrained Delaunay Triangulation]
  \label{lem:triangles_in_constrained_Delaunay_triangulation}
  Let $R$ be a bounded polygonal region in $\Rspace^2$, assume its vertex set is generic, and let $u, v, w$ be vertices of $R$.
  If the triangle $uvw$ is contained in $R$, and its circumcircle does not enclose any vertex of $R$, then $uvw$ is a triangle in the constrained Delaunay triangulation of $R$.
  Similarly, if the segment $uv$ is contained in $R$ but is not an edge of $R$, and it has a circumcircle that does not enclose any vertex of $R$, then $uv$ is an edge of the constrained Delaunay triangulation of $R$.
\end{lemma}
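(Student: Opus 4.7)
The plan is that both claims follow from the triangle characterization of the constrained Delaunay triangulation recorded just before the lemma, with the edge claim passing through an intermediate edge characterization that can be derived from the triangle one by a pencil-of-circles argument.

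For the first claim, the stated characterization says that a triangle $uvw\subseteq R$ belongs to the constrained Delaunay triangulation of $R$ iff its circumcircle encloses no vertex of $R$ visible from $\interior{uvw}$. The hypothesis is strictly stronger, asserting that no vertex of $R$ at all is enclosed, so the characterization applies directly.

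For the second claim, I would first establish the analogous edge characterization: an edge $uv$ with $\interior{uv}\subseteq \interior{R}$ belongs to the constrained Delaunay triangulation of $R$ iff there exists a circle through $u$ and $v$ whose open disk contains no vertex of $R$ visible from $\interior{uv}$. Granted this, the hypothesized empty circumcircle of $uv$ is in particular empty of visible vertices, and the desired conclusion follows at once. The edge characterization itself is to be derived from the triangle characterization by the following construction: take the given empty circle and, within the pencil of circles through $u$ and $v$, deform it continuously (parametrizing the pencil by the signed distance of the center from the line $uv$) until the first moment a visible vertex $w$ of $R$ lies on the circle. By continuity and by genericity of the vertex set of $R$, the open disk of the resulting circle is empty of all vertices, so in particular of visible ones; applying the first claim to the triangle $uvw$ then gives $uvw$ as a triangle of the constrained Delaunay triangulation with $uv$ as one of its edges.

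The main obstacle is performing the pencil-of-circles deformation in a way that certifies the triangle $uvw$ actually lies in $R$. Boundary edges of $R$ can intrude into the original disk even though their endpoints are outside it, so one must choose the direction of deformation carefully: toward the side of $uv$ on which a full neighborhood of $\interior{uv}$ still lies in $R$, and stop at the first instant a visible vertex enters the closed disk. The hypotheses that $uv$ is not a boundary edge of $R$ (so $\interior{uv}\subseteq \interior{R}$ and both half-neighborhoods of $\interior{uv}$ initially lie in $R$) and that the vertex set of $R$ is generic (so no two vertex-crossings of the shrinking circle happen simultaneously) are what make the argument go through; this is the technical crux of the proof and is essentially the classical existence-and-characterization argument for constrained Delaunay triangulations going back to Chew \cite{Che89}.
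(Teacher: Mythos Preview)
The paper does not prove this lemma at all: it is stated without proof immediately after the sentence recalling the visibility characterization of constrained Delaunay triangles (attributed to Chew~\cite{Che89}), with the phrase ``We state a weaker necessary condition for later reference.'' In other words, the paper treats both claims as folklore consequences of that characterization and of the standard theory in \cite{Che89,HNU99}.

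Your handling of the triangle claim is exactly the paper's implicit reasoning: the hypothesis is a strengthening of the visibility hypothesis in the characterization, so the conclusion is immediate. For the edge claim you supply substantially more detail than the paper does, via a pencil-of-circles deformation that produces a triangle to which the first claim applies. This is a standard and correct line of argument; your identification of the delicate point---ensuring that the triangle $uvw$ produced by the deformation is actually contained in $R$---is apt, and restricting attention to the first \emph{visible} vertex encountered is the right move. One small sharpening: visibility of $w$ from $\interior{uv}$ by itself does not obviously yield $uvw\subseteq R$; what you actually want is that the entire closed lune between the original empty circle and the deformed circle on the chosen side of $uv$ stays inside $R$ up to the stopping time, which follows because any boundary edge of $R$ entering that lune would have to bring a visible endpoint with it (its nearer endpoint is visible from $\interior{uv}$ and lies in the lune, contradicting minimality of the stopping time). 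With that refinement your argument is complete and goes well beyond what the paper itself provides.
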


\smallskip
We use constrained Delaunay triangulations to decompose white regions in aged hypertriangulations.
To explain, let $P$ be a complete triangulation of a finite and generic set, $A \subseteq \Rspace^2$, let $x \in A$ be a vertex of this triangulation, call $\white{P,x} = \Star{P,x} \cap \conv{(A \setminus \{x\})}$ the \emph{white region} of $x$ in $P$, and let $P(x)$ be a triangulation of $\white{P,x}$.
Note that $\white{P,x} = \Star{P,x}$ if $x$ is an interior vertex, and $\white{P,x} \subsetneq \Star{P,x}$ if $x$ is a convex hull vertex.
In the special case in which $P$ is the order-$1$ Delaunay triangulation and $P(x)$ is the constrained Delaunay triangulation of $\white{P,x}$, for each $x \in A$, these sets contain all white triangles in the order-$2$ Delaunay triangulation, albeit the latter are only half the size. 
This can be seen from the following observation. If we consider the (order-1) Delaunay triangulation of $A\setminus\{x\}$, then it contains \emph{old} triangles from $P$ not incident to $x$ and \emph{new} triangles that retriangulate $\Star{P,x} \cap \conv{(A \setminus \{x\})}$. 
The circumcircle of each new triangle encloses $x$.
Thus, the new triangles are the white triangles from Definition~\ref{def:order-k_Delaunay_triangulation} and also the triangles from the constrained Delaunay triangulation of $\Star{P,x} \cap \conv{(A \setminus \{x\})}$.

\medskip
More generally, we use the constrained Delaunay triangulations of the white regions to disambiguate the aging function.
This is done extensively in the proofs of our main results in Section~\ref{sec:4}, where they are useful because constrained Delone triangulations optimize angle vectors, and in Section~\ref{sec:5}, where they are useful because they satisfy the local angle property.

\section{Optimality of the Sorted Angle Vector}
\label{sec:4}

In this section, we prove the first main result of this paper in an exhaustive case analysis.
With the exception of Section~\ref{sec:4.4}, we work only with complete level-$2$ hypertriangulations.
To aid the discussion, we begin by introducing convenient terminology and stating a few elementary lemmas.

\subsection{Triangulations and Angle Vectors}
\label{sec:4.1}

Let $A \subseteq \Rspace^2$ be a finite set of points,
and let $P$ be a complete triangulation of $A$, and write $P_2 = F(P)$ for the (complete) level-$2$ hypertriangulation whose white regions are decomposed by constrained Delaunay triangulations.
We prefer to work with the original points of $A$, rather than the midpoints of its pairs.
We therefore write $\Phi_2 = f(P)$ for the collection of triangles in $P$, together with the triangles in the constrained Delaunay triangulations of the $\white{P,x}$, with $x \in A$.
Consistent with the earlier convention, we call the triangles of $\Phi_2$ in $P$ \emph{black} and the other triangles of $\Phi_2$ \emph{white}.
Accordingly, we write $\Black{\Phi_2}$ for the black triangles in $\Phi_2$, and $\White{\Phi_2,x}$ for the white triangles in $\Phi_2$ that triangulate $\white{P,x}$.
There is a bijection between $\Phi_2$ and $P_2$ such that the corresponding triangles are similar (scaled by a factor $\frac{1}{2}$ and possibly inverted), so the triangles in $\Phi_2$ and $P_2$ define the same angles.
Letting $m$ be the number of triangles, we write
$
  \Vector{P_2}  =  \Vector{\Phi_2}  =  ( \varphi_1, \varphi_2, \ldots, \varphi_{3m} )
$
for the vector of angles, which we order such that $\varphi_i \leq \varphi_{i+1}$ for $1 \leq i \leq 3m-1$. 

\smallskip
Repeating the construction with another (maximal) triangulation $Q$ of $A$, we get another (complete) level-$2$ hypertriangulation of $m$ black and white triangles, $Q_2$, and another increasing angle vector, $\Vector{Q_2} = \Vector{\Psi_2} = (\psi_1, \psi_2,\ldots,\psi_{3m})$, in which $\Psi_2 = f(Q)$.
It is \emph{lexicographically larger} than the vector of $\Phi_2$, denoted $\Vector{\Phi_2} \prec \Vector{\Psi_2}$, if there exists an index $1 \leq p \leq m$ such that $\varphi_i = \psi_i$, for $1 \leq i \leq p-1$, and $\varphi_p < \psi_p$.
We write $\Vector{\Phi_2} \preceq \Vector{\Psi_2}$ to allow for the possibility of equal angle vectors.
This notation is useful because it is possible that two different triangulations, $P \neq Q$, have the same angle vector.
For example, if $A$ has only $4$ points and they are in convex position, then there are only two different triangulations of $A$, and the black triangles in the level-$2$ hypertriangulation of one are the white triangles in the level-$2$ hypertriangulations of the other, and vice versa.

\smallskip
According to Lemma~\ref{lem:complete_implies_maximal}, all maximal level-$2$ hypertriangulations have the same number of triangles.
We can therefore compare their angle vectors lexicographically as described.

\subsection{Elementary Lemmas}
\label{sec:4.2}

If $uvw$ is a triangle in $\White{\Phi_2,x}$, then it is not possible that $u$ lies inside $xvw$.
This is true independent of how we triangulate $\white{P,x}$:
\begin{lemma}[Star-convex Triangulation]
  \label{lem:star-convex_triangulation}
  Let $uvw$ be a triangle in $\White{\Phi_2,x}$.
  Then either $x$ is inside $uvw$ or $x, u, v, w$ are the vertices of a convex quadrangle.
  {\sloppy
  
  }
\end{lemma}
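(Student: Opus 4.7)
My plan is to argue by contradiction, leveraging the star-convexity of $\Star{P,x}$ with respect to $x$. Assume $x \notin \mathrm{int}(\triangle uvw)$; I want to show $\{x,u,v,w\}$ are the vertices of a convex quadrangle. In generic position this is equivalent to asserting that no one of the four points lies inside the triangle formed by the other three, so since $x\notin\mathrm{int}(\triangle uvw)$ holds by hypothesis, it suffices to rule out each case in which one of $u,v,w$ lies inside the triangle formed by $x$ and the other two.

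Suppose for contradiction that, without loss of generality, $u\in\mathrm{int}(\triangle xvw)$. Then the ray from $x$ through $u$, extended past $u$, meets the segment $vw$ at a point $p$ with $\Edist{x}{p}>\Edist{x}{u}$. Because $\triangle uvw$ is a triangle in a triangulation of the polygon $\white{P,x}$, its closure is contained in $\white{P,x}\subseteq\Star{P,x}$, and hence $p\in vw\subseteq\Star{P,x}$.

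On the other hand, $u$ is a vertex of the polygon $\white{P,x}$. In the main case, where $x$ is an interior vertex of $P$, one has $\white{P,x}=\Star{P,x}$, and every such vertex is necessarily a neighbor of $x$ in $P$---that is, a vertex on the link of $x$. By the star-convexity of $\Star{P,x}$ with respect to $x$, the ray from $x$ through any link vertex $u$ exits $\Star{P,x}$ exactly at $u$, so every point strictly past $u$ on this ray lies outside $\Star{P,x}$. This contradicts $p\in\Star{P,x}$, completing the argument.

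The principal obstacle is carrying out the same argument when $x$ is a convex-hull vertex of $A$, in which case $\white{P,x}$ is a proper subset of $\Star{P,x}$ and may pick up boundary vertices from the chain of $\conv{(A\setminus\{x\})}$. A short verification shows that the corners of the polygon $\white{P,x}$ used as triangulation vertices remain neighbors of $x$ in $P$, so star-convexity of $\Star{P,x}$ again delivers the needed contradiction.
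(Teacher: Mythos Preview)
Your argument is correct and follows essentially the same route as the paper: assume one of $u,v,w$ lies inside the triangle formed by $x$ and the other two, shoot a ray from $x$ through that vertex, and obtain a contradiction from the star-convexity of $\Star{P,x}$ together with the fact that $u,v,w$ lie on the boundary of the star. The paper likewise splits into the interior-vertex and convex-hull-vertex cases and, just as you do, asserts in the second case (without spelling out the ``short verification'') that the vertices $u,v,w$ remain on $\partial\Star{P,x}$.
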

\begin{proof}
  Assume first that $x$ is an interior vertex, so $\conv{(A \setminus \{x\})} = \conv{A}$.
  Since $\white{P,x}$ is star-convex, with $x$ in its kernel, every half-line emanating from $x$ intersects the boundary of $\white{P,x}$ in exactly one point.
  Now suppose $u$ lies inside the triangle $xvw$, and consider the half-line emanating from $x$ that passes through $u$.
  Since $x$ lies in the interior of $\white{P,x}$, the half-line goes from inside to outside the region as it passes through $u$.
  But it also enters the triangle $uvw$, which lies inside $\white{P,x}$.
  This is a contradiction because entering and leaving $\Star{P,x}$ at the same time is impossible.

  Assume second that $x$ is a vertex of $\conv{A}$, so $\conv{(A \setminus \{x\})} \neq \conv{A}$.
  Since $uvw$ is a triangle in $\white{P,x}$, it is also a triangle in $\Star{P,x}$.
  Furthermore, $u, v, w$ are points on the boundary of $\Star{P,x}$, and every half-line emanating from $x$ that has a non-empty intersection with the interior of $\conv{A}$ intersects this boundary in exactly one point.
  Assuming $u$ lies inside $xvw$, we can now repeat the argument of the first case and get a contradiction because the half-line passing through $u$ both enters and leaves $\Star{P,x}$ when it passes through $u$.
\end{proof}

Every point $x \in A$ belongs to at least two edges in $P$.
However, if $x$ belongs to only two edges, then every line that crosses both edges necessarily separates $x$ from all points in $A \setminus \{x\}$.
We state and prove a generalization of this observation.
\begin{lemma}[Splitting a Triangulation]
  \label{lem:splitting_a_triangulation}
  Let $P$ be a triangulation of a finite set $A \subseteq \Rspace^2$, let $L$ be a line, and let $Q$ be the vertices and edges of $P$ that are disjoint of $L$.
  Then $Q$ consists of at most two connected components, one on each side of $L$.
\end{lemma}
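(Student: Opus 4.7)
The plan is to partition $Q$ according to which open half-plane of $L$ its components lie in, and then show that the vertices of $P$ strictly on one side are all mutually connected in $Q$. Since any edge disjoint from $L$ has both endpoints strictly on the same side (an edge with endpoints on opposite sides crosses $L$, and an edge with an endpoint on $L$ meets $L$), every connected component of $Q$ is confined to one open half-plane. It therefore suffices to verify, for a fixed open half-plane $H$ of $L$, that any two vertices of $P$ lying in $H$ belong to the same connected component of $Q$.

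First I would fix two such vertices $u$ and $v$ in $H$ and consider the straight segment $uv$. Because both $\conv{A}$ and $H$ are convex and contain $u$ and $v$, the segment lies in $\conv{A} \cap H$. Generically, $uv$ traverses a sequence of triangles $T_1, T_2, \ldots, T_k$ of $P$, with $u$ a vertex of $T_1$, $v$ a vertex of $T_k$, and consecutive triangles sharing an edge $e_i$ that the segment crosses at a point of $H$.

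Next I would argue that each crossed edge $e_i$ has at least one endpoint $w_i$ in $H$: otherwise both endpoints of $e_i$ would lie in the closed complementary half-plane, forcing $e_i$ itself to lie there by convexity, which contradicts the fact that $e_i$ contains a point of $uv \subseteq H$. Setting $w_0 = u$ and $w_k = v$, I observe that for each $i$ between $1$ and $k$, both $w_{i-1}$ and $w_i$ are vertices of $T_i$ that lie in $H$. Either they coincide, or the edge $w_{i-1}w_i$ is an edge of $T_i$, hence of $P$; in the latter case convexity of $H$ forces this edge to lie entirely in $H$, so it belongs to $Q$. Concatenating these edges yields a walk in $Q$ from $u$ to $v$, proving connectedness.

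The main obstacle I expect is handling degenerate configurations in which the open segment $uv$ either passes through a vertex of $P$ other than $u,v$ or overlaps with an edge of $P$ rather than crossing it transversely. In each such case the offending vertex or edge lies in $H$ by convexity and hence already in $Q$, so it can be spliced directly into the walk, reducing to the generic case applied to shorter subsegments. The remaining work is routine bookkeeping of how these pieces concatenate.
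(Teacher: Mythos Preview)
Your argument is correct and takes a genuinely different route from the paper. The paper fixes one side of $L$ and, from any vertex $a$ on that side, observes that either $a$ already lies on the portion of $\partial(\conv{A})$ strictly on that side, or $P$ contains an edge from $a$ to a neighbour strictly farther from $L$; iterating this ``walk away from $L$'' produces a height-monotone path in $Q$ that terminates on the convex-hull boundary arc, which is itself a connected path in $Q$. Thus every vertex on that side is linked to a single arc. You instead connect two prescribed vertices $u,v$ directly by tracking the straight segment $uv$ through the triangles of $P$ and selecting, in each traversed triangle, a vertex on the correct side of $L$. The paper's version is slightly shorter because it sidesteps the case analysis of how a segment can meet the triangulation (transversal crossings versus passing through a vertex or running along an edge), whereas your version has the advantage of not invoking the convex-hull boundary at all and of producing an explicit $Q$-path between any two specified vertices.
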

\begin{proof}
  Assume without loss of generality that $L$ is horizontal, and let $A' \subseteq A$ contain all points strictly above $L$.
  The boundary of $\conv{A}$ is a closed convex curve, $\gamma$, and we write $\gamma' \subseteq \gamma$ for the vertices and edges strictly above $L$.
  Every point $a \in A'$ is either a vertex of $\gamma'$, or there is an edge $ab$ in $P$, with $b$ above $L$ and farther from $L$ than $a$.
  Hence, $ab \in Q$.
  We can therefore trace a path from $a$ that eventually reaches a vertex of $\gamma'$ in $Q$, which implies that the part of $Q$ strictly above $L$ is either empty or connected.
  Symmetrically, the part of $Q$ strictly below $L$ is either empty or connected, which implies the claim.
\end{proof}

By construction, the interior points of a black triangle, $abc \in P$, belong to $\Star{P,a}$, $\Star{P,b}$, $\Star{P,c}$ but not to the stars of any other vertices.
Hence, only the white triangles used in the triangulation of these three stars can possibly share interior points with $abc$.
If a white triangle shares one or two of the vertices with $abc$, then this further restricts the stars this white triangle may help triangulate.
\begin{lemma}[Shared Interior Points]
  \label{lem:shared_interior_points}
  Let $P$ be a triangulation of a finite set $A \subseteq \Rspace^2$, let $abc$ be a black triangle and $uvw$ a white triangle in $\Phi_2 = f(P)$, and suppose that $abc$ and $uvw$ share interior points.
  \begin{enumerate}
    \item[{\rm (1)}] If $u=a$ and $v=b$, then $uvw \in \White{\Phi_2, c}$.
    \item[{\rm (2)}] If $v=b$ is the only shared vertex between $abc$ and $uvw$, then $uw$ cannot cross both $ab$ and $bc$.
    \item[{\rm (3)}] If $v = b$ and $uw$ crosses $bc$, then $uvw \in \White{\Phi_2, c}$.
    \item[{\rm (4)}] $uvw \in \White{\Phi_2, x}$ for only one point $x \in A$.
  \end{enumerate}
\end{lemma}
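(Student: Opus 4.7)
The plan is to base all four parts on a single structural principle together with one simple corollary. The principle: if $uvw \in \White{\Phi_2,x}$, then $uvw \subseteq \white{P,x} \subseteq \Star{P,x}$, so every $P$-triangle whose interior meets $uvw$ must contain $x$ as one of its three vertices. The corollary: since $x$ lies in the kernel of the star-convex polygon $\white{P,x}$ when $x$ is interior, and strictly outside $\conv{(A \setminus \{x\})}$ when $x$ is a convex hull vertex, $x$ is never itself a vertex of $\white{P,x}$; in particular, $x \notin \{u,v,w\}$.

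Parts (1)--(3) will follow quickly. For (1), applying the principle to the black triangle $abc$ gives $x \in \{a,b,c\}$, while $u = a$ and $v = b$ eliminate $a$ and $b$, forcing $x = c$. For (2) and (3), I will sharpen the principle as follows: if the open segment $uw$ crosses the open edge $ab$ at some point $p$, then arbitrarily close to $p$ the segment $uw \subseteq \white{P,x}$ has points on both sides of $ab$, so both $P$-triangles sharing $ab$ must contain $x$; the only common vertices of those two triangles are $a$ and $b$, so $x \in \{a,b\}$. The analogous statement holds for $bc$. Hence in (2), $uw$ crossing both $ab$ and $bc$ would give $x \in \{a,b\} \cap \{b,c\} = \{b\} = \{v\}$, contradicting the corollary; and in (3), crossing $bc$ gives $x \in \{b,c\}$, from which $v = b$ leaves $x = c$.

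The main content lies in (4). Suppose, toward a contradiction, that $uvw$ belongs to both $\White{\Phi_2,x}$ and $\White{\Phi_2,x'}$ for distinct $x,x' \in A$. Then every $P$-triangle whose interior meets $uvw$ contains both $x$ and $x'$ as vertices. Since $uvw$ has positive area, this forces $xx'$ to be an edge of $P$, and the interior of $uvw$ is covered by the (at most two) $P$-triangles $xx'z_1$ and $xx'z_2$ sharing that edge; hence $uvw$ lies inside the quadrilateral $xz_1x'z_2$. Within this quadrilateral, the boundary of $\Star{P,x}$ consists precisely of the two edges opposite $x$, namely $x'z_1 \cup x'z_2$, and symmetrically for $\Star{P,x'}$. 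Since $u,v,w$ are vertices of both polygons $\white{P,x}$ and $\white{P,x'}$, they lie on both boundaries and are therefore confined to
$(x'z_1 \cup x'z_2) \cap (xz_1 \cup xz_2) = \{z_1,z_2\}$,
a set of only two points — contradicting the distinctness of $u,v,w$.

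The step I expect to be the main obstacle is making the argument for (4) airtight when $x$ or $x'$ is a convex hull vertex of $A$, since $\white{P,x}$ then inherits extra boundary from $\partial\conv{(A \setminus \{x\})}$ that can cut across the quadrilateral. The key fact is that $x \notin \conv{(A \setminus \{x\})}$, so the corollary $x \notin \{u,v,w\}$ remains intact, and genericity prevents the new intersection points on $\partial\conv{(A \setminus \{x\})}$ from lying in $A$; coupled with the observation that vertices of $\white{P,x'}$ coming from the interior side must be actual points of $A$, the confinement of $\{u,v,w\}$ to $\{z_1,z_2\}$ survives after a short case analysis.
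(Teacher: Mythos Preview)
Your arguments for (1)--(3) are essentially the paper's: both rest on the principle that every black triangle of $P$ meeting the interior of $uvw \in \White{\Phi_2,x}$ must have $x$ among its vertices, together with $x \notin \{u,v,w\}$. For (4) the organization differs. The paper argues forward, casing on whether $uvw$ meets exactly two or at least three black triangles and showing in each case that their common vertex is uniquely determined by that collection. You instead argue by contradiction: assuming two centers $x \neq x'$, you confine $uvw$ to the quadrilateral on the $P$-edge $xx'$ and force $\{u,v,w\} \subseteq \{z_1,z_2\}$ by intersecting the two link boundaries. Your route avoids the case split (and the apparent one-triangle case the paper skips), but both proofs are driven by the same structural principle, so the difference is one of packaging rather than of idea. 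Your caveat about convex-hull vertices is well placed --- the paper's proof is equally silent on that case, and the patch you sketch is adequate.
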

\begin{proof}
  (1) is immediate because $c$ is the only vertex of $abc$ that is not also a vertex of $uvw$.
  
  To see (2), assume that $uw$ crosses $ab$ and also $bc$.
  Then $uvw$ shares interior points with three black triangles in $\Phi_2$, namely $abc$ and the neighboring triangles that share $ab$ and $bc$ with $abc$.
  The only common vertex of the three black triangles is $b$, so $uvw \in \White{\Phi_2, b}$, but this is impossible because $b=v$.

  To see (3), note that $uvw$ shares interior points with two black triangles: $bac$ and the black triangle on the other side of $bc$.
  Hence, $uvw$ is contained in $\Star{P,b}$ or $\Star{P,c}$.
  Since $b = v$, the only remaining choice is $uvw \in \White{\Phi_2, c}$.

  To see (4), consider first the case that $uvw$ shares interior points with only two black triangles, $abc$ and $bcd$.
  Then one of its edges, say $uv$ crosses $bc$, so $u=a$ and $w=d$.
  But $v$ cannot lie in the interior of the two black triangles or its edges, so $v=b$.
  Then $c$ is the only remaining point such that $uvw \in \White{\Phi_2, c}$.
  If $uvw$ shares interior points with three or more black triangles, then the black triangles share only one common vertex, $x$, hence $uvw \in \White{\Phi_2, x}$.
\end{proof}

\subsection{Global Optimality}
\label{sec:4.3}

The first main result of this paper asserts that Sibson's theorem on increasing angle vectors extends from order-$1$ to order-$2$ Delaunay triangulations.
We first illustrate our approach by giving a new proof of Sibson's angle vector optimality for the order-1 Delaunay triangulation. 
Note that we establish the non-strict optimality while Sibson's theorem gives the strict one in the generic case.
\begin{theorem}[Sibson's Theorem \cite{Sib78}]
  \label{thm:sibson}
  Let $A \subseteq \Rspace^2$ be finite and generic, $P$ a complete triangulation of $A$, and $D = \Delaunay{}{A}$.
  Then $\Vector{P} \preceq \Vector{D}$.
\end{theorem}

\begin{proof}
  The genericity of $A$ implies $\measuredangle xay \neq \measuredangle xby$ whenever points $a, b \in A$ lie on the same side of the line that passes through $x, y \in A$.
  However, there may be two or more triplets of points in $A$ that define the same angle.
  It will be convenient to have distinct angles, so we first apply a perturbation that preserves the order of unequal angles while making equal angles different. 
  The relation for the perturbed points implies the same but possibly non-strict relation for the original points, since undoing the perturbation does not change the order of any two angles.
  So assume that the angles defined by the points in $A$ are distinct, and to derive a contradiction, assume $\Vector{D} \prec \Vector{P}$.
  More specifically, we write $\alpha_1 < \alpha_2 < \ldots < \alpha_{3m}$ and $\varphi_1 < \varphi_2 < \ldots < \varphi_{3m}$ for the angles of $D$ and $P$, respectively, and we assume $\alpha_i = \varphi_i$, for $1 \leq i \leq p-1$, and $\alpha_p < \varphi_p$, for some $1 \leq p \leq 3m$.
  In other words, $p$ is the first index at which the two angle vectors differ, and the $p$-th angle of $D$ is smaller than the $p$-th angle of $P$.
  Write $\alpha = \alpha_p$ and let $bac \in D$ be the triangle with $\alpha = \measuredangle bac$.
  By the assumption of distinct angles, $bac \not\in P$. To simplify the discussion, we assume that the line, $L$, that passes through $b$ and $c$ is horizontal and that $a$ lies above $L$.
  Since $P$ is a complete triangulation of $A$, every point of $A$ is a vertex of $P$. We consider two cases for the triangles of $P$ that cover the upper side of $bc$\footnote{We use the phrase ``to cover the upper side of $bc$'' informally, in the sense that we consider triangles that intersect or contain $bc$ and some parts of an open neighborhood of $bc$ above $bc$. A more formal explanation is given in the proof of Theorem \ref{thm:angle_vector_optimality}.}.

  \smallskip
  If there is only one such triangle, $bdc$, then $d\neq a$ as $P$ does not contain the triangle $bac$. Since $D$ is the Delaunay triangulation of $A$, $d$ lies outside the circumcircle of $bac$ and therefore $\measuredangle bdc < \alpha$. This implies that $bdc$ must be a triangle of $D$ as the angle vectors of $D$ and $P$ coincide at angles less than $\alpha$. 
  This is a contradiction as both $bac$ and $bdc$ cannot belong to $D$.

  \smallskip
  If there are at least two triangles of $P$ covering the upper side of $bc$, then we consider the triangle $bxy$ that shares $b$ with $bac$.
  Here $x$ is above $L$ and $y$ is below $L$. Since $c$ is a vertex of $P$, $xy$ intersects $bc$ and $\measuredangle bxy < \measuredangle bxc < \alpha$. 
  As in the first case, this implies that $bxy$ is a triangle in $D$, which leads to a contradiction as the interiors of the triangles $bxy$ and $bac$ overlap.
\end{proof}

We now use a similar approach to establish the angle vector optimality for the order-2 Delaunay triangulation. 

\begin{theorem}[Angle Vector Optimality]
  \label{thm:angle_vector_optimality}
  Let $A \subseteq \Rspace^2$ be finite and generic, $P_2$ any complete level-2 hypertriangulation of $A$, and $\Delta_2 = f(\Delaunay{}{A})$.
  Then $\Vector{P_2} \preceq \Vector{\Delta_2}$.
\end{theorem}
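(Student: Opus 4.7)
The plan is to argue by contradiction, combined with an exhaustive geometric case analysis driven by the elementary lemmas of Section~\ref{sec:4.2} and the local angle property of $\Delaunay{}{A}$ established in Theorem~\ref{thm:order-k_Delaunay_triangulations_have_lap}. I would first suppose that $\Vector{\Delta_2} \prec \Vector{\Phi_2}$ and let $\alpha$ be the smallest real number whose multiplicity in $\Vector{\Delta_2}$ strictly exceeds its multiplicity in $\Vector{\Phi_2}$, so that every angle strictly below $\alpha$ has the same multiplicity in both vectors. Fix a triangle $T = uvw$ of $\Delta_2$ realizing an unmatched copy of $\alpha$ at, say, vertex $v$. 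The goal is then to exhibit a triangle of $\Phi_2$ carrying an angle strictly less than $\alpha$ that is itself unmatched in $\Delta_2$, contradicting the minimality of $\alpha$.

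The case analysis would split first according to whether $T$ is black or white in $\Delta_2$, and then according to how $T$ overlaps the triangles of $\Phi_2$. In the black case, $T$ is a face of $\Delaunay{}{A}$; either $T \in P$, so that $\alpha$ already has a match in $\Vector{\Phi_2}$ and $T$ was not unmatched, or some edge of $T$ is absent from $P$. In the latter situation, $P$ contains an edge crossing the interior of $T$; Lemmas~\ref{lem:shared_interior_points} and~\ref{lem:splitting_a_triangulation} constrain which triangles of $\Phi_2$ may intersect $T$, and the appropriate clause of the local angle property applied to this crossing edge yields an angle smaller than $\alpha$ in one of them. In the white case, $T$ is a triangle of the constrained Delaunay triangulation of $\white{\Delaunay{}{A}, x}$ for some $x \in A$; I would compare $T$ with the constrained Delaunay triangulation of $\white{P, x}$ and invoke the local optimality of the constrained Delaunay triangulation of a fixed polygon, together with the star-convexity statement of Lemma~\ref{lem:star-convex_triangulation} to bridge between the two white regions.

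The main obstacle will be the white case when $T$ straddles several triangles of $P$, so that $T$ shares interior points with several black triangles of $\Phi_2$ and simultaneously with white triangles belonging to different stars of $P$. Here the correct clause -- \textsc{(ww)}, \textsc{(bb)}, or \textsc{(bw)} -- of the local angle property has to be selected for each overlap, and clauses~(1)--(4) of Lemma~\ref{lem:shared_interior_points} have to be invoked to identify which edge of $P$ is responsible for the angle strictly below $\alpha$ in $\Phi_2$. The circumcircle of the original triple in $A$ underlying $T$ plays a central role: Theorem~\ref{thm:order-k_Delaunay_triangulations_have_lap} controls the angle $\alpha$ in terms of which points of $A$ lie inside this circle, and comparing with the circumcircles of the adjacent triangles of $P$ is what ultimately produces the smaller unmatched angle. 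Discharging this case list produces the desired contradiction and establishes $\Vector{\Phi_2} \preceq \Vector{\Delta_2}$.
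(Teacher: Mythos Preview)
Your high-level scaffold---assume $\Vector{\Delta_2} \prec \Vector{\Phi_2}$, isolate the smallest offending angle $\alpha$ at a triangle $bac \in \Delta_2$, and split on whether $bac$ is black or white---matches the paper. The paper additionally perturbs so that all angles are distinct, which collapses your multiplicity bookkeeping to the cleaner statement ``$bac \notin \Phi_2$.''

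The genuine gap is in your white case. You propose to compare the constrained Delaunay triangulation of $\white{D,x}$ with that of $\white{P,x}$, but these are \emph{different polygons}: the star of $x$ in $P$ need not contain $bac$ at all, and the optimality of a constrained Delaunay triangulation says nothing about two triangulations of two distinct regions. The paper does not attempt any such comparison. Instead, in \emph{both} the black and white cases it uses a single mechanism that your outline does not identify: take the edge $bc$ opposite the angle $\alpha$, and analyze how the triangles of $\Black{\Phi_2}$ cover the side of $bc$ containing $a$. Each covering black triangle has an apex angle $<\alpha$, so by minimality of $\alpha$ it must also lie in $\Delta_2$; circumcircle considerations then force it into a specific $\White{\Delta_2,\cdot}$. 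The contradiction is geometric, not a direct ``unmatched angle'': two such triangles are produced that share interior points yet would both have to lie in the same white region of $\Delta_2$.

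In the white case specifically, letting $d$ be the point with $bac \in \White{\Delta_2,d}$, the paper's subcases split on the position of $d$ relative to the line through $b,c$, and a crucial intermediate step is a separately argued claim that $bd$ and $cd$ must be edges of $\Black{\Phi_2}$. This is what pins down the covering and permits construction (via Lemma~\ref{lem:splitting_a_triangulation} and Lemma~\ref{lem:triangles_in_constrained_Delaunay_triangulation}) of a second overlapping triangle in the same white region. Your sketch invokes Lemma~\ref{lem:shared_interior_points} and the local angle property in roughly the right spirit, but the actual engine---the covering analysis of $bc$ and the resulting overlap in a single $\White{\Delta_2,\cdot}$---is missing, and without it the white case does not close.
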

\begin{proof}
  Let $P$ be the complete triangulation of $A$ such that $P_2=F(P)$. Note, however, that we do not assume that the white triangles of $P_2$ define constrained Delaunay triangulations of the white regions. 
  Let $\Phi_2=F(P)$ be the designated aging of $P$ so $\Phi_2$ shares its black triangles with $P_2$, and the white triangles of $\Phi_2$ form constrained Delaunay triangulation of the white regions. 
  For each white region of $P_2$ and $\Phi_2$, we apply Lawson's edge-flip algorithm to turn the white triangles of $P_2$ into the white triangles of $\Phi_2$; see \cite[Section~2.10.3]{CDS12} for more details about this algorithm.
  Since every flip lexicographically improves the angle vector, we get $\Vector{P_2}\preceq \Vector{\Phi_2}$, so it remains to show that $\Vector{\Phi_2}\preceq \Vector{\Delta_2}$.

  \smallskip
  Write $D = \Delaunay{}{A}$, so $\Delta_2 = f(D)$.
  The genericity of $A$ implies that $D$ and $\Delta_2$ are unique, but there may be two or more triplets of points that define the same angle.
  As in the proof of Theorem \ref{thm:sibson}, it will be convenient to have distinct angles, so we apply a perturbation and assume that the angles defined by the points in $A$ are distinct.
  To derive a contradiction, assume $\Vector{\Delta_2} \prec \Vector{\Phi_2}$. 
  More specifically, we write $\alpha_1 < \alpha_2 < \ldots < \alpha_{3m}$ and $\varphi_1 < \varphi_2 < \ldots < \varphi_{3m}$ for the angles of $\Delta_2$ and $\Phi_2$, respectively, and we assume $\alpha_i = \varphi_i$, for $1 \leq i \leq p-1$, and $\alpha_p < \varphi_p$, for some $1 \leq p \leq 3m$.
  In other words, $p$ is the first index at which the two angle vectors differ, and the $p$-th angle of $\Delta_2$ is smaller than the $p$-th angle of $\Phi_2$.
  Write $\alpha = \alpha_p$ and let $bac \in \Delta_2$ be the triangle with $\alpha = \measuredangle bac$.
  By the assumption of distinct angles, $bac \not\in \Phi_2$.
  To simplify the discussion of the various cases, we assume without loss of generality that
  \begin{itemize}
    \item the line, $L$, that passes through $b$ and $c$ is horizontal;
    \item the triangle $bac$, and therefore the vertex $a$, lie above $L$;
  \end{itemize}
  see Figures~\ref{fig:Case1} and \ref{fig:Case2}.
  We first consider the case in which $bac$ is a black triangle.
  There are three subcases, and in each we get a contradiction by constructing two triangles that share interior points.
  Note that two white triangles may share interior points, but not if they triangulate the same star.
  \begin{figure}[hbt]
    \centering \vspace{0.1in}
    \resizebox{!}{1.68in}{\input{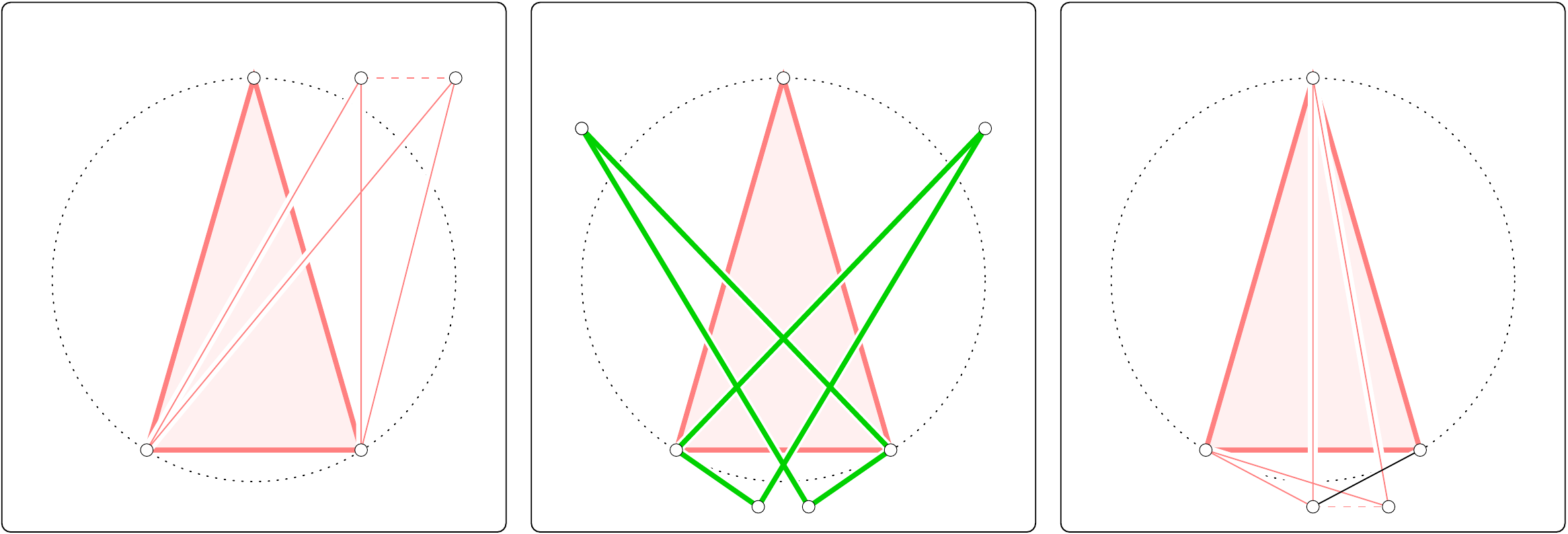_t}}
    \caption{Edges of black and white triangles are \emph{bold} and \emph{fine}, respectively, and edges of triangles in $\Delta_2$ and $\Phi_2$ are \emph{pink} and \emph{green}, respectively.
    \emph{Left:} two overlapping triangles in $\White{\Delta_2,a}$ constructed in Case 1.1.
    \emph{Middle:} two crossing edges of black triangles in $\Phi_2$ constructed in Case 1.2.1.
    \emph{Right:} two overlapping triangles in $\White{\Delta_2,c}$ constructed in Case 1.2.2.}
    \label{fig:Case1}
  \end{figure}

  \noindent \textsc{Case 1:}
  {\bf $bac$ is a black triangle in $\Delta_2$.}
  By definition of $D = \Delaunay{}{A}$, $bac$ does not contain a point of $A$ in its interior, and if $x \in A \setminus \{a\}$ lies above $L$, then the angle $\measuredangle bxc$ is strictly smaller than $\alpha$.
  We say a collection of triangles \emph{covers the upper side} of the edge $bc$ if every interior point of $bc$ has an open neighborhood whose intersection with the closed half-plane above $L$ is contained in the union of these triangles.
  The black triangles in $\Phi_2$ cover the entire convex hull of $A$ and therefore also the upper side of $bc$.
  It is possible that a single black triangle in $\Phi_2$ suffices for this purpose, and this is our first subcase.

  \smallskip \noindent \textsc{Case 1.1:}
  {\bf the upper side of $bc$ is covered by a single triangle, $bxc \in \Black{\Phi_2}$,} as in Figure~\ref{fig:Case1} on the left.
  Since $\measuredangle bxc < \alpha$, $bxc$ must be a white triangle in $\Delta_2$.
  Specifically, since $a$ and $x$ are both above $L$, and $a$ lies inside the circumcircle of $bxc$, we have $bxc \in \White{\Delta_2, a}$.

  To get a contradiction, we construct a second such white triangle.
  Since there are at least two points of $A$ above $L$, Lemma~\ref{lem:splitting_a_triangulation} implies that $P$ contains an edge connecting $x$ to another point, $x' \neq x$, above $L$.
  Hence, $\white{P,x}$ has a non-empty overlap with the open half-plane above $L$.
  Since $bc$ belongs to the boundary of $\white{P,x}$, there is a triangle $bx'c$ in $\White{\Phi_2,x}$.
  We have $x' \neq x$ by construction, and $x' \neq a$ because this would imply that $\measuredangle bx'c = \alpha$ is an angle in $\Vector{\Phi_2}$, which we assumed it is not.
  Since $x'$ lies outside the circumcircle of $bac$, we have $\measuredangle bx'c < \alpha$, so $bx'c \in \White{\Delta_2,a}$.
  But $bxc$ and $bx'c$ share interior points, which is a contradiction. 

  \smallskip \noindent \textsc{Case 1.2:}
  {\bf to cover the upper side of $bc$ requires two or more triangles in $\Black{\Phi_2}$}, as in Figure~\ref{fig:Case1} in the middle and on the right.
  Among these triangles, let $bxy$ and $cx'y'$ be the ones that share the vertices $b$ and $c$ with $bac$.
  Assuming $x, x'$ lie above $L$ and $y, y'$ lie below $L$, we have $\measuredangle bxy < \alpha$ and $\measuredangle cx'y' < \alpha$, which implies $bxy, cx'y' \in \Delta_2$.
  The two triangles share interior points with $bac$, so they cannot be black and are therefore white in $\Delta_2$.

  \smallskip \noindent \textsc{Case 1.2.1:}
  {\bf at least one of $x, x'$ differs from $a$.}
  Assume $x \neq a$.
  Since $xy$ crosses $bc$, it must cross another edge of $bac$, which by Lemma~\ref{lem:shared_interior_points} (2) can only be $ac$.
  If $x' = a$, then $x'c = ac$, and if $x' \neq a$, then $x'y'$ crosses $ab$ and $bc$, again by Lemma~\ref{lem:shared_interior_points} (2).
  In either case, $bxy$ and $cx'y'$ share interior points inside triangle $abc$, which contradicts $bxy, cx'y' \in \Black{\Phi_2}$.

  \smallskip \noindent \textsc{Case 1.2.2:}
  {\bf both $x$ and $x'$ are equal to $a$.}
  Then $bay, cay' \in \Black{\Phi_2}$.
  Since $\measuredangle bay < \alpha$ and $\measuredangle cay' < \alpha$, both are white triangles in $\Delta_2$. By Lemma~\ref{lem:shared_interior_points} (1), $bay \in \White{\Delta_2, c}$ and $cay' \in \White{\Delta_2, b}$, which implies that $cy$ and $by'$ are edges in $\Delaunay{}{A}$.
  If $y\neq y'$, then there are three possible choices for the points $b$, $c$, $y$, $y'$.
  First, they form a convex quadrangle, $byy'c$, with the points ordered as they are seen from $a$.
  The vertices of the quadrangle must be ordered that way as both triangles, $bay$ and $cay'$, cover parts of the upper side of $bc$ and cannot overlap since they both belong to $\Black{\Phi_2}$.
  But then $by'$ and $cy$ cross, which contradicts that they both belong to $\Delaunay{}{A}$.
  Second, $y$ lies inside $bcy'$.
  Since $cay'\in \White{\Delta_2,b}$, the circumcircle of $cay'$ encloses $b$ and therefore $y$, which is one point too many for a white triangle in $\Delta_2$.
  Third, $y'$ lies inside $bcy$, but this is symmetric to the second choice.
  Since we get a contradiction for all three choices, we conclude that $y=y'$.

  To get a contradiction, we use Lemma~\ref{lem:splitting_a_triangulation} to construct yet another triangle $baz \in \White{\Delta_2,c}$.
  Specifically, we let $L$ be the line that passes through $a$ and $b$, and rotate the picture so $L$ is horizontal and $c, y$ lie above $L$.
  Hence, there is a point $z$ above $L$ such that $yz$ is an edge in $P$ and $baz \in \White{\Phi_2,y}$.
  We have $z \neq y$ by construction, and $z \neq c$ by assumption on angle $\alpha$.
  Since $ba$ and $ac$ are both edges in the boundary of $\Star{P, y}$, $za$ crosses $bc$, so $\measuredangle baz < \alpha$, which implies that $baz$ is a white triangle in $\Delta_2$, and by Lemma~\ref{lem:shared_interior_points} (1), $baz \in \White{\Delta_2, c}$.
  But $bay$ and $baz$ share interior points, which is a contradiction.
  This concludes the proof of the first case.

  \begin{figure}[hbt]
    \centering \vspace{0.1in}
    \resizebox{!}{1.60in}{\input{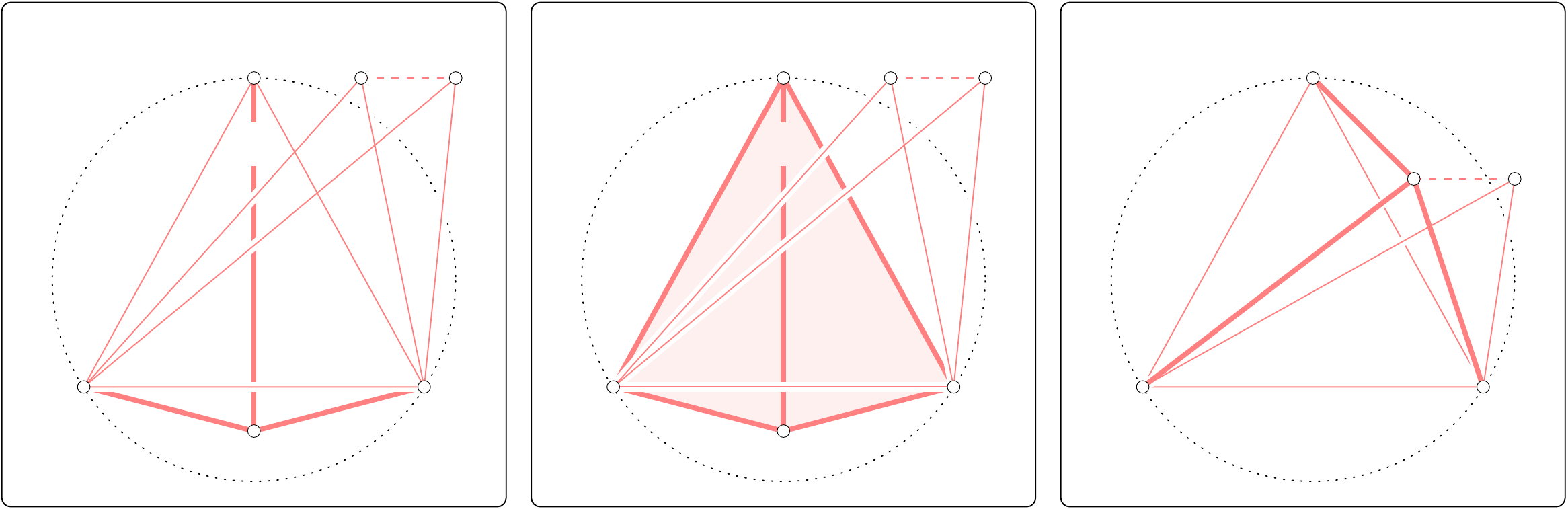_t}}
    \caption{As before, we draw edges of black and white triangles \emph{bold} and \emph{fine}, respectively.
    To simplify, we show only edges of triangles in $\Delta_2$.
    \emph{Left:} two overlapping triangles in $\White{\Delta_2, a}$ constructed in Case~2.1.1.
    \emph{Middle:} similar two overlapping triangles in $\White{\Delta_2, a}$ constructed in a chain of deductions in Case~2.1.2.
    \emph{Right:} a white triangle whose circumcircle encloses two points constructed in Case~2.2.}
    \label{fig:Case2}
  \end{figure}

  \medskip \noindent \textsc{Case 2:}
  {\bf $bac$ is a white triangle in $\Delta_2$.}
  Let $d$ be the point such that $bac \in \White{\Delta_2,d}$.
  Then $da$, $db$, $dc$ are edges of black triangles in $\Delta_2$.
  We distinguish between the cases in which $d$ lies below and above $L$.

  \smallskip \noindent \textsc{Case 2.1:}
  {\bf $d$ lies below $L$;} see the left and middle panels of Figure~\ref{fig:Case2}.
  Then $\measuredangle bxc < \measuredangle bac$ for all $x \in A$ above $L$, and $\measuredangle byc < \measuredangle bdc$ for all $y \in A$ below $L$.
  Indeed, since $bac \in \White{\Delta_2,d}$, the circumcircle of $bac$ contains only $d$ inside, because only such white triangles are included in the order-2 Delaunay triangulation $\Delta_2$; see Definition \ref{def:order-k_Delaunay_triangulation}. 
  This implies the inequality for every $x\in A$ above $bc$.
  For every $y\in A$ below $bc$, we have $\measuredangle bac +\measuredangle byc <\pi$, since $y$ is outside the circumcircle $abc$, while $\measuredangle bac +\measuredangle bdc >\pi$, since $d$ is inside the circumcircle $abc$. 
  This implies $\measuredangle byc < \measuredangle bdc$.

  Similar to Case~1.1, we distinguish between the upper side of $bc$ being covered by one or requiring two or more black triangles in $\Phi_2$.
  In both cases, we derive a contradiction by constructing triangles in $\White{\Delta_2, a}$ that share interior points.

  \smallskip \noindent \textsc{Case 2.1.1:}
  {\bf the upper side of $bc$ is covered by a single triangle, $bxc \in \Black{\Phi_2}$}; see the left panel of Figure~\ref{fig:Case2}.
  Then $\measuredangle bxc < \alpha$, so $bxc$ is a triangle in $\Delta_2$, and since $a$ lies inside its circumcircle, we have $bxc \in \White{\Delta_2, a}$.
  Using Lemma~\ref{lem:splitting_a_triangulation}, we find a point $x'$ above $L$ such that $xx'$ is an edge in $P$ and $bx'c$ is a triangle in $\White{\Phi_2, x}$.
  We have $x' \neq x$ by construction, and $x' \neq a$, else $\measuredangle bx'c = \alpha$ would be an angle in $\Vector{\Phi_2}$.
  Again $\measuredangle bx'c < \alpha$, so $bx'c \in \White{\Delta_2, a}$.
  This is a contradiction because $bxc$ and $bx'c$ share interior points.

  \smallskip \noindent \textsc{Case 2.1.2:}
  {\bf to cover the upper side of $bc$ requires at least two triangles in $\Black{\Phi_2}$}.
  Among these triangles, let $bxy$ and $cx'y'$ be the ones that share $b$ and $c$ with $bac$, respectively, and assume that $x, x'$ are above $L$ and $y, y'$ are below $L$.
  We first prove that $d$ is connected to $b$ and $c$ by edges of black triangles in $\Phi_2$, and thereafter derive a contradiction by constructing two triangles in $\White{\Delta_2, a}$ that share interior points.

  \smallskip \noindent
  \emph{Claim:} $bd$ and $cd$ are edges of triangles in $\Black{\Phi_2}$.

  \smallskip \noindent
  \emph{Proof.}
    To derive a contradiction, assume the claim is false and $bd$ is not edge of any black triangle in $\Phi_2$.
    Hence $y \neq d$.
    Since $\measuredangle bxy < \alpha$, $bxy$ is also in $\Delta_2$.
    It shares interior points with the star of $d$ without having $d$ as a vertex, which implies that $bxy$ must be white in $\Delta_2$.

    Consider $bdc$, which is not necessarily a triangle in $\Delta_2$ or $\Phi_2$.
    However, since $d$ is the only point inside the circumcircle of $bac$, there is no point of $A$ inside $bdc$.
    Since $xy$ crosses $bc$, it must cross either $bd$ or $cd$.
    Assuming $xy$ crosses $bd$, $bxy$ shares interior points with the two black triangles with common edge $bd$ in $\Delta_2$, so $bxy \in \White{\Delta_2, d}$ by Lemma~\ref{lem:shared_interior_points} (3).
    This is not possible since $bxy$ and $bac$ share interior points.
    Thus, $xy$ crosses $cd$.
    Since $bxy \in \Black{\Phi_2}$, this implies that $cd$ cannot be edge of any black triangle in $\Phi_2$.
    Hence $y' \neq d$, so we can use the symmetric argument to conclude that $x'y'$ crosses $bd$.
    But this is a contradiction since in this case $bxy$ and $cx'y'$ share interior points inside the triangle $bcd$; see the middle panel of Figure~\ref{fig:Case1} where the situation is similar.
    This completes the proof of the claim.
    
  \smallskip
  Since $bd$ and $cd$ are edges of triangles in $\Black{\Phi_2}$, we have $y = y' = d$.
  Consider $\Star{P,d}$, which contains $b$ and $c$ on its boundary.
  The black triangles in $\Phi_2$ that cover the upper side of $bc$ all share $d$ as a vertex, which implies that $bc$ lies inside this star.
  Indeed, by Lemma~\ref{lem:triangles_in_constrained_Delaunay_triangulation}, it is an edge of a triangle in $\White{\Phi_2, d}$.
  Indeed, among the points of $A$, the circumcircle of $bac$ contains only $d$ inside, but $d$ is not a vertex of the polygonal region $\white{\Phi_2, d}$.
  Thus, there exists a triangle $bzc \in \White{\Phi_2, d}$ with $z$ above $L$.
  We have $z \neq a$ by assumption on $\alpha$, so $\measuredangle bzc < \alpha$, which implies that $bzc$ is also a white triangle in $\Delta_2$, and since its circumcircle encloses $a$, $bzc \in \White{\Delta_2, a}$.

  To construct a second such white triangle, note that this implies that $ab$ and $ac$ are edges of triangles in $\Black{\Delta_2}$.
  As illustrated in the middle panel of Figure~\ref{fig:Case2}, all of $ab, ac, ad, bd, cd$ are edges of black triangles in $\Delta_2$, so $bad, cad \in \Black{\Delta_2}$.
  Hence, $bd$ and $cd$ are edges in the boundary of $\Star{D,a}$, and since $bzc \in \White{\Delta_2, a}$, we also have $bdc \in \White{\Delta_2, a}$.
  The angle at $b$ satisfies $\measuredangle dbc < \measuredangle dac < \alpha$ because $a$ lies inside the circumcircle of $dbc$, and since $dbc$ is a triangle in $\Delta_2$, it must therefore also be a triangle in $\Phi_2$.
  It cannot be in $\Black{\Phi_2}$ because the upper side of $bc$ requires at least two black triangles of $\Phi_2$ to be covered, by assumption.
  Hence, $dbc$ is white in $\Phi_2$.
  It shares interior points with the two black triangles with common edge $dz$ in $\Phi_2$, so $dbc \in \White{\Phi_2, z}$, by Lemma~\ref{lem:shared_interior_points} (3).

  Finally consider $\White{\Phi_2, z}$.
  It contains $bdc$ and, by Lemma~\ref{lem:splitting_a_triangulation}, it covers the upper side of $bc$.
  Hence, there is a triangle $bz'c \in \White{\Phi_2, z}$ with $z'$ above $L$.
  We have $z' \neq z$ by construction, and $z' \neq a$ by assumption on $\alpha$.
  Again, $\measuredangle bz'c < \alpha$, so $bz'c \in \Delta_2$, and since its circumcircle encloses $a$, we have $bz'c \in \White{\Delta_2, a}$.
  But this is a contradiction because $bzc$ and $bz'c$ share interior points.

  \smallskip \noindent \textsc{Case 2.2:}
  {\bf $d$ lies above $L$;} see the right panel of Figure~\ref{fig:Case2}.
  Similar to Case 2.1.2, we begin by proving that $d$ is connected to $b$ and $c$ by edges of black triangles in $\Phi_2$.

  \smallskip \noindent
  \emph{Claim:} $bd$ and $cd$ are edges of triangles in $\Black{\Phi_2}$.

  \smallskip \noindent
  \emph{Proof.} To derive a contradiction, assume the claim is false and $bd$ is not edge of any black triangle in $\Phi_2$.
  Among the one or more black triangles needed to cover the upper side of $bc$, let $bxy \in \Black{\Phi_2}$ be the triangle that shares $b$ with $bac$.
  Letting $x$ be the vertex above $L$, we have $x \neq d$ by assumption.
  If $bxy$ covers the upper side of $bc$ by itself, then $y = c$, and otherwise, $y$ lies below $L$.
  In either case, $\measuredangle bxy < \alpha$, so $bxy$ is also a triangle in $\Delta_2$.
  It cannot be black because it shares interior points with $\Star{D, d}$ without having $d$ as a vertex, so $bxy$ is a white triangle in $\Delta_2$.
  But this implies $y \neq c$.
  Indeed, if $y = c$, then either $bxy = bac$, which contradicts the assumption on $\alpha$, or the circumcircle of $bxy$ encloses $a$ as well as $d$, which is one point too many for a white triangle in $\Delta_2$.

  So $y$ is below $L$.
  Note that the circumcircle of $bac$ encloses $d$ and therefore $bdc$, and since $x$ lies on or outside this circle, it cannot lie inside $bdc$.
  Since $xy$ crosses $bc$, it thus must cross another edge of this triangle, either $bd$ or $cd$.
  Assuming $xy$ crosses $bd$, which is common to two black triangles in $\Delta_2$, we get $bxy \in \White{\Delta_2, d}$ from Lemma~\ref{lem:shared_interior_points} (3).
  But $bxy$ and $bac \in \White{\Delta_2, d}$ share interior points, which is a contradiction.
  Hence, $xy$ crosses $bc$ and $cd$, so $cd$ cannot be an edge of a black triangle in $\Phi_2$.

  Let now $cx'y'$ be among the triangles in $\Black{\Phi_2}$ needed to cover the upper side of $bc$ that shares $c$ with $bac$.
  By a symmetric argument, we conclude that $x'y'$ crosses $bc$ and $bd$.
  But this is a contradiction because $bxy$ and $cx'y'$ share interior points inside the triangle $bcd$; see again the middle panel of Figure~\ref{fig:Case1} but substitute $d$ for $a$.
  This completes the proof of the claim.

  \smallskip
  Hence, $bd$ and $cd$ are edges of black triangles in $\Phi_2$.
  This implies that $b$ and $c$ are points in the boundary of $\Star{P, d}$.
  As argued above, there are no points of $A$ inside $bdc$, so $\Star{P, d}$ covers the upper side of $bc$.
  There is a circle that passes through $b$ and $c$ and encloses $d$ but no other points of $A$, so by Lemma~\ref{lem:triangles_in_constrained_Delaunay_triangulation}, $bc$ is an edge of a triangle in $\White{\Phi_2, d}$.
  Let $z$ be the point above $L$ such that $bzc \in \White{\Phi_2, d}$.
  We have $z \neq d$ by construction, and $z \neq a$ by assumption on $\alpha$.
  Hence, $\measuredangle bzc < \alpha$, which implies that $bzc$ is also a triangle in $\Delta_2$.
  However, the circumcircle of $bzc$ encloses $a$ and $d$, which is one too many for a white triangle in $\Delta_2$.
  This furnishes the final contradiction and completes the proof of the theorem.
\end{proof}

\subsection{Counterexamples}
\label{sec:4.4}

Can Theorem~\ref{thm:angle_vector_optimality} be extended or strengthened?
In this subsection, we present examples that contradict the extension to order beyond $2$ and the strengthening to order-$2$ hypertriangulations obtained from possibly incomplete triangulations.

\begin{figure}[hbt]
  \centering \vspace{0.1in}
  \resizebox{!}{0.93in}{\input{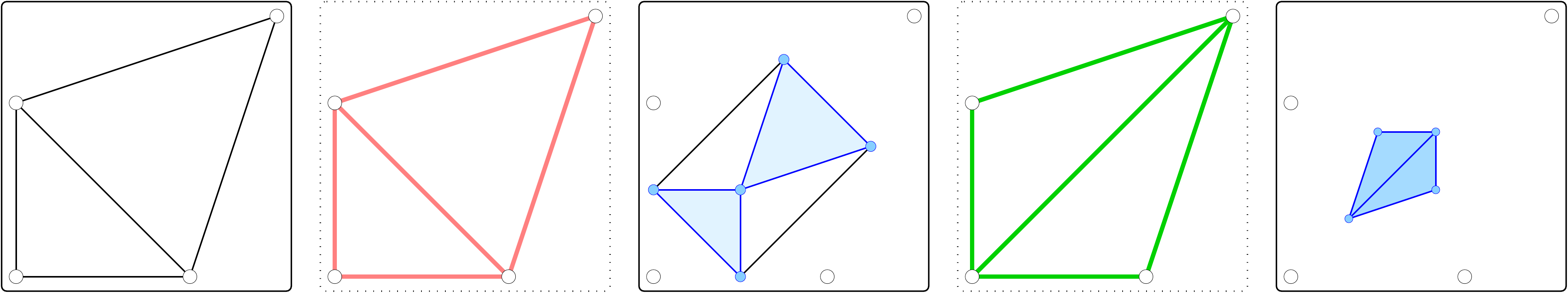_t}}
  \caption{From \emph{left} to \emph{right}: the order-$1$, order-$2$, and order-$3$ Delaunay triangulations of four points, interleaved with the two possible triangulations of these points.}
  \label{fig:convex4}
\end{figure}
\medskip \noindent \textbf{Order beyond 2.}
Four points in convex position permit only two triangulations: $D = \Delaunay{}{A}$, and $P$, which consists of the other two triangles spanned by the four points.
As illustrated in Figure~\ref{fig:convex4}, $\Delaunay{2}{A}$ consists of shrunken and possibly inverted copies of all four triangles, and $\Delaunay{3}{A}$ consists of shrunken and inverted copies of the two triangles in $P$.
Assuming $A$ is generic, Sibson's theorem implies $\Vector{P} \prec \Vector{D}$.
There are two level-$3$ hypertriangulations: the order-$3$ Delaunay triangulation, with $\Vector{\Delaunay{3}{A}} = \Vector{P}$, and another, with $\Vector{P_3} = \Vector{D}$.
Hence, $\Vector{\Delaunay{3}{A}} \prec \Vector{P_3}$.
In words, the vector inequality asserted in Theorem~\ref{thm:angle_vector_optimality} for order-$2$ Delaunay triangulations does not even extend to order $3$.

\smallskip
Compare this with Eppstein's theorem \cite{Epp92}, which asserts that for $n$ points in convex position in $\Rspace^2$, the order-$(n-1)$ Delaunay triangulation lexicographically minimizes the increasing angle vector.
For $n = 4$ and points in convex position, the above conclusion is a consequence of this theorem.

\medskip \noindent \textbf{Incomplete hypertriangulations.}
Theorem~\ref{thm:angle_vector_optimality} compares the order-$2$ Delaunay triangulation with all \emph{complete} level-$2$ hypertriangulations, each aged from a triangulation that contains each point in $A$ as a vertex.
Enlarging this collection to possibly incomplete level-$2$ hypertriangulations is problematic since they do not necessarily have the same number of angles as $\Delaunay{2}{A}$.
We can still compare the smallest angles, but there are counterexamples.
Indeed, Figure~\ref{fig:min_counterexample} shows a set of nine points whose order-$2$ Delaunay triangulation does not maximize the minimum angle if incomplete level-$2$ hypertriangulations participate in the competition.
We note that for these particular nine points, the angle vectors of $\Delaunay{2}{A}$ and the displayed level-$2$ hypertriangulation have the same length.
This implies that the requirement of \emph{completeness} cannot be weakened to \emph{maximality}, which is equivalent to having the same number of triangles.
\begin{figure}[hbt]
  \centering \vspace{0.0in}
  \resizebox{!}{3.6in}{\input{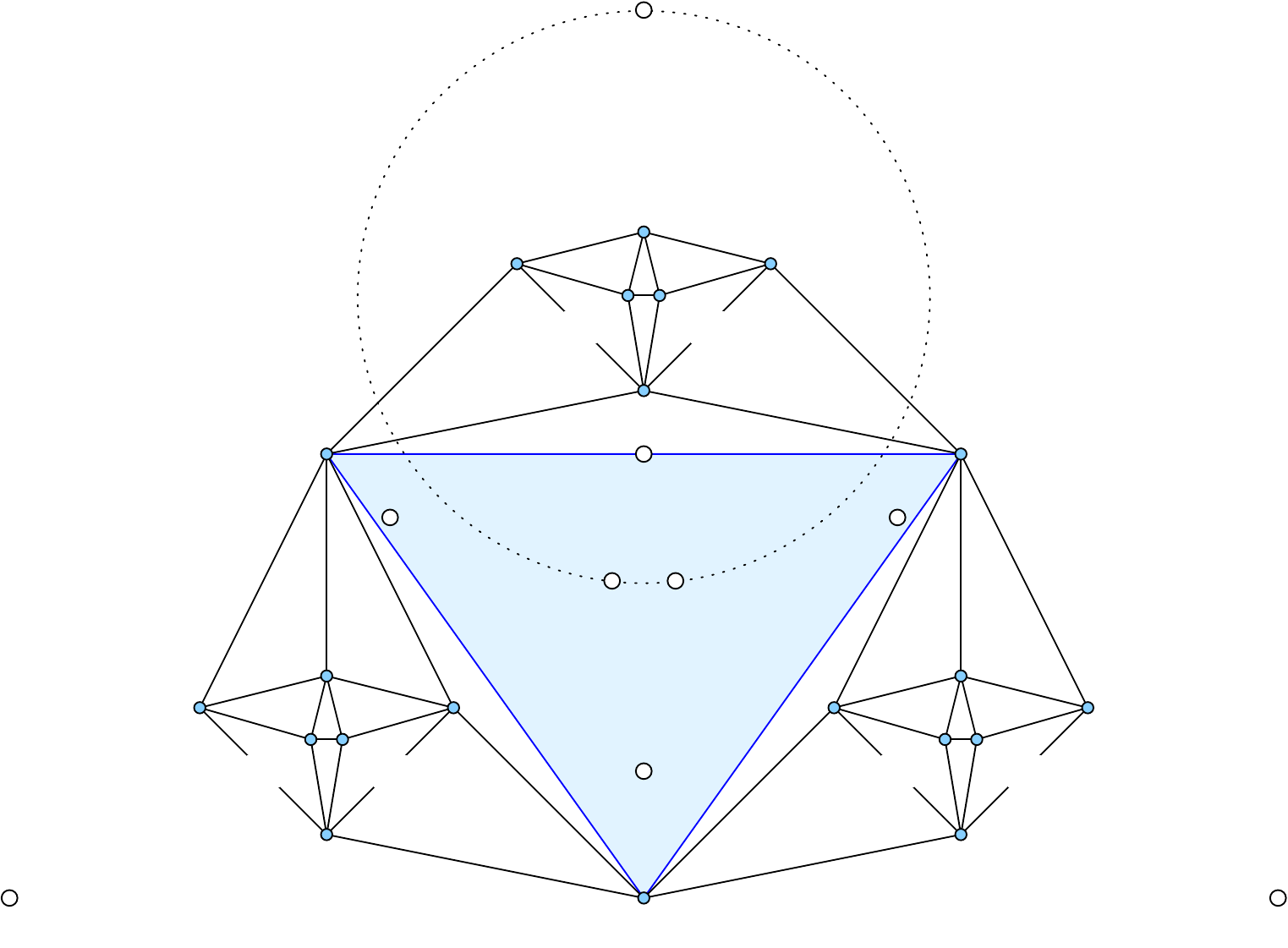_t}}
  \caption{The minimum angle in the displayed level-$2$ hypertriangulation is larger than the minimum angle of the order-2 Delaunay triangulation of the same points.
  Indeed, the smallest angle in the hypertriangulation of about $9$ degrees is defined by the vertices $\Label{eh}, \Label{dh}, \Label{gh}$.
  For comparison, the circle in the picture proves that the angle of about $6.4$ degrees defined by the vertices $\Label{bc}, \Label{cd}, \Label{ac}$ belongs to the order-$2$ Delaunay triangulation (not shown).}
  \label{fig:min_counterexample}
\end{figure}

\subsection{Corollary for MaxMin Angle}
\label{sec:4.5}

Theorem~\ref{thm:angle_vector_optimality} implies that among all complete level-$2$ hypertriangulation, the order-$2$ Delaunay triangulation is distinguished by maximizing the minimum angle.
Using Sibson's result for level-$1$ hypertriangulations \cite{Sib78}, there is a short proof of this corollary.
No such similarly short proof is known for the angle vector optimality of order-$2$ Delaunay triangulations.
\begin{corollary}[MaxMin Angle Optimality]
  \label{cor:maxmin_angle_optimality}
  Let $A \subseteq \Rspace^2$ be finite and generic, and $P$ a complete triangulation of $A$.
  Then the minimum angle of the triangles in $\Phi_2 = f(P)$ is smaller than or equal to the minimum angle of the triangles in $\Delta_2 = f(\Delaunay{}{A})$.
\end{corollary}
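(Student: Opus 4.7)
The plan is to derive this corollary as an immediate consequence of Theorem~\ref{thm:angle_vector_optimality}. That theorem asserts the lexicographic inequality $\Vector{\Phi_2} \preceq \Vector{\Delta_2}$ on the sorted (non-decreasing) angle vectors. By Lemma~\ref{lem:complete_implies_maximal}, both $\Phi_2$ and $\Delta_2$ have the same number of triangles, so the two sorted vectors have equal length, and the first coordinate of each is by construction the minimum angle of the corresponding hypertriangulation. Reading off the first-coordinate comparison from the lex inequality yields $\min\Phi_2 \leq \min\Delta_2$, which is the claim.

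I would also describe the shorter alternative route hinted at in the text, which uses only Sibson's theorem for level-$1$ hypertriangulations. The first step is to observe that the black triangles of $\Phi_2$ are scaled copies of the triangles of $P$, so $\min\Phi_2 \leq \min P$, and Sibson's theorem then gives $\min P \leq \min\Delaunay{}{A}$. If $\min\Delta_2$ is attained by a black triangle of $\Delta_2$ (that is, by a triangle of $\Delaunay{}{A}$), then $\min\Delta_2 = \min\Delaunay{}{A}$ and the chain $\min\Phi_2 \leq \min P \leq \min\Delaunay{}{A} = \min\Delta_2$ closes the argument.

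The main obstacle in this Sibson-only route is the remaining case, in which $\min\Delta_2$ is realized by a white triangle $[ya][yb][yc]$ of $\Delta_2$ and can be strictly smaller than $\min\Delaunay{}{A}$. By Definition~\ref{def:order-k_Delaunay_triangulation} the configuration has $y$ as the unique point of $A$ inside the circumcircle of $abc$, and the task reduces to exhibiting an angle in $\Phi_2$ bounded by the minimum angle $\beta$ of $abc$. A natural sub-division is by whether $y$ lies inside $abc$ or outside $abc$ but inside its circumcircle; in each sub-case one exploits the tight local constraints that $y$'s position imposes on $P$ and on the constrained Delaunay triangulations of the white regions $\white{P,x}$ to locate a small angle. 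Routing through Theorem~\ref{thm:angle_vector_optimality} absorbs this case analysis uniformly, and that is why going via the theorem is the cleanest plan.
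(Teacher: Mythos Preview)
Your primary route via Theorem~\ref{thm:angle_vector_optimality} is correct: the two sorted vectors have the same length, and the lexicographic inequality immediately gives the first-coordinate inequality.

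However, the paper deliberately does \emph{not} prove the corollary this way. Its point is that the MaxMin statement admits a short self-contained proof using only Sibson's level-$1$ theorem, independent of the long case analysis in Theorem~\ref{thm:angle_vector_optimality}. The key idea you are missing in your ``Sibson-only route'' is this: for each $x\in A$, let $P(x)$ be the complete triangulation of $A\setminus\{x\}$ obtained from $P$ by removing the triangles incident to $x$ and filling $\white{P,x}$ with its constrained Delaunay triangulation, and set $D(x)=\Delaunay{}{A\setminus\{x\}}$. Then every triangle of $\Phi_2$ appears in $P$ or in some $P(x)$, and every triangle of $\Delta_2$ appears in $D$ or in some $D(x)$; in fact the constrained Delaunay triangulation of $\white{D,x}$ coincides with the restriction of $D(x)$ to that region. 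Sibson applied to $P$ versus $D$ and to each $P(x)$ versus $D(x)$ gives $\min\Phi_2=\min\bigl(\min P,\min_x\min P(x)\bigr)\le\min\bigl(\min D,\min_x\min D(x)\bigr)=\min\Delta_2$ in one stroke, with no black/white case split and no analysis of where the point $y$ sits relative to $abc$. Your attempted Sibson route stalls precisely because it lacks this reduction; what the paper's approach buys is an argument that does not depend on Theorem~\ref{thm:angle_vector_optimality} at all.
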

\begin{proof}
  Write $D = \Delaunay{}{A}$, for each $x \in A$, write $D(x) = \Delaunay{}{A \setminus \{x\}}$, and let $P(x)$ be the triangulation of $A \setminus \{x\}$ obtained by removing the triangles that share $x$ from $P$ and adding the triangles in the constrained Delaunay triangulation of $\white{P,x}$.
  By Sibson's theorem, the smallest angle in $P$ is smaller than or equal to the smallest angle in $D$, and for each $x \in A$, the smallest angle in $P(x)$ is smaller than or equal to the smallest angle in $D(x)$.
  The smallest angle in $\Delta_2$ is the minimum angle in $D$ and all $D(x)$, and the smallest angle in $\Phi_2$ is the minimum angle in $P$ and all $P(x)$, for $x \in A$.
  Hence, the smallest angle in $\Phi_2$ is smaller than or equal to the smallest angle in $\Delta_2$.
\end{proof}

\section{Uniqueness of Local Angle Property}
\label{sec:5}

In this section, we prove the second main result of this paper, which supports the Local Angle Conjecture formulated at the end of Section~\ref{sec:3.3} by proving it for the case $k = 2$.
We begin with three basic lemmas on hypertriangulations that satisfy some or all of the conditions in Definition~\ref{def:local_angle_property}.

\subsection{Useful Lemmas}
\label{sec:5.1}

To streamline the discussion, we call a union of black triangles a \emph{black region} if its interior is connected and it is not contained in a larger black region of the same triangulation.
Similarly, we define \emph{white regions}.
Furthermore, we refer to \emph{black} or \emph{white angles} when we talk about the angles inside a black or white triangle.
\begin{lemma}[Black Regions are Convex]
  \label{lem:black_regions_are_convex}
  Let $A \subseteq \Rspace^2$ be finite and generic, and let $P_k$ be a level-$k$ hypertriangulation of $A$ that satisfies \textsc{(bb)}.
  Then every black region of $P_k$ is convex, and all vertices of the restriction of $P_k$ to the black region lie on the boundary of that region.
\end{lemma}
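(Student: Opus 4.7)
The plan is to exploit the angle-sum constraints imposed by \textsc{(bb)} through two summation arguments: one cyclic, one along an arc. I will first rule out interior vertices of a black region, and then show that every boundary vertex of the region has interior angle strictly less than $\pi$, which together give convexity and the boundary condition.

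First, I would assume for contradiction that some vertex $v$ of $P_k$ lies in the interior of a black region $B$. Then every triangle of $P_k$ incident to $v$ must belong to $B$, so there is a full fan $T_1 = vu_0u_1$, $T_2 = vu_1u_2$, $\ldots$, $T_d = vu_{d-1}u_0$ of $d \geq 3$ black triangles around $v$, with the $d$ edges $vu_i$ each shared by two consecutive black triangles of the fan. The angles at $v$ sum to $2\pi$, so the remaining $2d$ angles (the two non-$v$ angles in each $T_j$) sum to $(d-2)\pi$. Applying \textsc{(bb)} to each edge $vu_i$ gives a lower bound of $\pi$ on the sum of the two angles opposite that edge in its two incident triangles; these opposite angles are exactly the non-$v$ angles, and the bookkeeping shows that every non-$v$ angle of every $T_j$ appears in exactly one such inequality. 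Summing the $d$ inequalities yields $(d-2)\pi \geq d\pi$, a contradiction.

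Second, let $v$ be a boundary vertex of $B$ with $d \geq 1$ black triangles $T_1 = vu_0u_1, \ldots, T_d = vu_{d-1}u_d$ of $B$ arranged consecutively around $v$, so the interior angle of $B$ at $v$ is $\angle_v = \sum_{j=1}^d \measuredangle u_{j-1}vu_j$. The non-$v$ angles in $T_1, \ldots, T_d$ sum to $d\pi - \angle_v$. Now only the $d-1$ edges $vu_1, \ldots, vu_{d-1}$ are interior to $B$, and \textsc{(bb)} applies to each. Summing these $d-1$ inequalities and again tracking which non-$v$ angles appear, every non-$v$ angle is counted exactly once except the angle at $u_1$ inside $T_1$ and the angle at $u_{d-1}$ inside $T_d$, call them $\alpha, \beta > 0$. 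Hence
\[
  d\pi - \angle_v - \alpha - \beta \;\geq\; (d-1)\pi,
\]
so $\angle_v \leq \pi - \alpha - \beta < \pi$. The case $d=1$ is trivial since a triangle angle is always less than $\pi$. Combined with Step~1, every boundary vertex of $B$ has interior angle strictly less than $\pi$ and no vertex lies in the interior of $B$, so $B$ is a convex polygon whose vertex set coincides with its boundary vertices.

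The only real obstacle is the bookkeeping: identifying precisely which non-$v$ angles are caught by the sum of \textsc{(bb)} inequalities around $v$. In the cyclic case every non-$v$ angle is covered exactly once, which forces the equality $(d-2)\pi$ on one side and contradicts the bound $d\pi$; in the arc case exactly two ``endpoint'' angles are missing, which is precisely what turns the inequality into the strict bound $\angle_v < \pi$ needed to exclude reflex vertices. Once this accounting is laid out cleanly, both statements of the lemma follow at once.
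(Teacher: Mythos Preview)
Your argument is correct and is essentially the same as the paper's: sum the \textsc{(bb)} inequalities over the interior edges of the black fan at a vertex, compare with the total angle sum of the fan triangles, and read off that the central angle is strictly below $\pi$ (boundary case) or reach a contradiction (interior case). The paper does the boundary case first and then remarks that the identical computation rules out interior vertices, while you do the two steps in the opposite order, but the bookkeeping and the conclusion are identical.
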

\begin{proof}
  Let $a$ be a boundary vertex of a black region, with edges $ab_0$, $ab_1, \ldots, ab_{p+1}$ bounding the $p+1$ incident black triangles in the region.
  \textsc{(bb)} implies $\measuredangle ab_{i-1}b_i + \measuredangle ab_{i+1}b_i > \pi$ for $1 \leq i \leq p$, so the sum of the $2(p+1)$ angles is larger than $p \pi$.
  Hence, the sum of the remaining $p+1$ angles at $a$ is less than $\pi$, as required for the black region to be convex at $a$.
  The same calculation shows that a ring of black triangles around a vertex in the interior of the black region is not possible.
  {\sloppy
  
  }
\end{proof}

\begin{lemma}[Total Black Angles]
\label{lem:total_black_angles}
  Let $A \subseteq \Rspace^2$ be finite and generic, and let $P_k$ be a level-$k$ hypertriangulation of $A$ that has the local angle property.
  Then the sum of black angles at any vertex of $P_k$ is less than $\pi$.
\end{lemma}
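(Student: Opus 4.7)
My plan is to reduce the statement to a single cyclic summation of LAP inequalities around $v$, after casting the three conditions \textsc{(bb)}, \textsc{(ww)}, and \textsc{(bw)} in a common sign-weighted form.

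I would first dispose of the boundary case. If $v$ lies on the boundary of $\conv{A^{(k)}}$, then its star is contained in the interior angle $\theta_v$ of the convex polygon $\conv{A^{(k)}}$ at $v$, and $\theta_v \le \pi$. So $B \le \theta_v \le \pi$, where $B$ denotes the sum of black angles at $v$. Strictness follows either from the presence of at least one white triangle at $v$, or, in the all-black case, from a linear summation of the \textsc{(bb)} inequalities along the star that forces $\theta_v < \pi$.

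For an interior vertex, I would enumerate the star triangles $T_1,\ldots,T_m$ cyclically, let $\alpha_i$ be the angle of $T_i$ at $v$, and let $\gamma_i,\delta_i$ be the remaining two angles, oriented so that the edge $e_i$ between $T_i$ and $T_{i+1}$ has opposite angle $\gamma_i$ in $T_i$ and $\delta_{i+1}$ in $T_{i+1}$. Assign $\sigma_i=+1$ to black triangles and $\sigma_i=-1$ to white ones. My key observation is that each of the three LAP conditions at $e_i$ can be rewritten as the unified inequality
\[\sigma_i\gamma_i + \sigma_{i+1}\delta_{i+1} \ \ge\ \tfrac{\sigma_i+\sigma_{i+1}}{2}\,\pi,\]
with strict inequality whenever $e_i$ is a bw-edge: for same signs this reduces to \textsc{(bb)} or \textsc{(ww)}, while for opposite signs the right-hand side is zero and one recovers the strict \textsc{(bw)} condition. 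Summing cyclically over $i$ and using $\gamma_i+\delta_i=\pi-\alpha_i$, the left-hand side rearranges to $\pi\sum_i\sigma_i - \sum_i\sigma_i\alpha_i$ while the right-hand side simplifies to $\pi\sum_i\sigma_i$ by symmetry, so cancellation gives $\sum_i\sigma_i\alpha_i\le 0$, i.e., $B\le W$ where $W$ is the sum of white angles at $v$. Combining with $B+W=2\pi$, we obtain $B\le\pi$.

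To upgrade to strict inequality, I would split on the colors appearing at $v$. If the star contains both colors, there must be at least one bw-edge, and its strict \textsc{(bw)} inequality propagates through the cyclic sum to give $B<W$, hence $B<\pi$. If all triangles at $v$ are white, $B=0<\pi$. If all are black, the cyclic sum of the \textsc{(bb)} inequalities alone forces $-2\pi\ge 0$, a contradiction (essentially the ring argument already implicit in Lemma~\ref{lem:black_regions_are_convex}). The main obstacle I had to overcome is discovering the sign-weighted unification of the three LAP conditions; without it, one is drawn into a case analysis on the arrangement of black and white regions around $v$ that I expect to be substantially more cumbersome.
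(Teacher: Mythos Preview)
Your argument is correct and takes a genuinely different route from the paper's. The paper first treats the case of at most one black region via Lemma~\ref{lem:black_regions_are_convex}, and for $p\ge 2$ black regions it works at the level of \emph{regions}: for each separating edge $ab_{2i}$ it compares the angle at the far corner of the adjacent black region with the angle at the far corner of the adjacent white region, invoking the extremal angle properties of the farthest-point Delaunay triangulation (for the black region, via \textsc{(bb)}) and the constrained Delaunay triangulation (for the white region, via \textsc{(ww)}), and then chaining these with \textsc{(bw)} to obtain $\sum(\text{black opposite angles})>\sum(\text{white opposite angles})$, which is equivalent to $B<W$.

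Your approach, by contrast, works at the level of individual triangles and never needs to recognize the regions as Delaunay or farthest-point Delaunay, nor does it need Lemma~\ref{lem:black_regions_are_convex} as a separate ingredient. The sign-weighted unification $\sigma_i\gamma_i+\sigma_{i+1}\delta_{i+1}\ge\tfrac{\sigma_i+\sigma_{i+1}}{2}\pi$ is the genuine new idea: it lets a single telescoping cyclic sum replace both the region-level reduction and the case split on the number of black regions. What the paper's approach buys is structural insight (it makes explicit that the black and white regions are themselves Delaunay-type triangulations, which is reused elsewhere); what your approach buys is economy and self-containment, and it shows transparently why the all-black interior case is impossible ($-2\pi\ge 0$) without appealing to a prior lemma.
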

\begin{proof}
  Let $a$ be a vertex of $P_k$. 
  If $a$ is a boundary vertex, then the claim is trivial. 
  If $a$ is an interior vertex and incident to at most one black region, then the claim follows from Lemma~\ref{lem:black_regions_are_convex}.
  So assume that $a$ is interior and incident to $p \geq 2$ black and therefore the same number of white regions.
  Let $ab_1, ab_2, \ldots, ab_{2p}$ be the edges separating the black and white regions around $a$, with the region between $ab_1$ and $ab_2$ being black. 
  We also assume that the angle between any two consecutive edges is less than $\pi$, else the claim is obvious.

  \smallskip
  We look at the edge $ab_2$ and claim that $\measuredangle ab_1b_2 > \measuredangle ab_3b_2$.
  The black region between $ab_1$ and $ab_2$ satisfies \textsc{(bb)}, so its triangulation is the farthest-point Delaunay triangulation.
  In it, every triangle that shares an edge with the boundary of the region has the property that the angle opposite to the boundary edge is minimal over all choices of third vertex \cite{Epp92}.
  Therefore, $\measuredangle ab_1b_2$ is greater than or equal to the angle opposite to $ab_2$ inside the black triangle.

  Similarly, the triangulation of the white region between $ab_2$ and $ab_3$ satisfies \textsc{(ww)}, so its triangulation is the constrained Delaunay triangulation of the region. 
  Thus, $\measuredangle ab_3b_2$ is smaller than or equal to the angle opposite to $ab_2$ inside the white triangle.
  Applying \textsc{(bw)} to $ab_2$, we get the claimed inequality.

  \smallskip
  We repeat the same argument for all other edges separating black from white regions around $a$, and compare the sum of black and white angles opposite these edges:
  \begin{align}
    \sum\nolimits_{i=0}^p \left( \measuredangle ab_{2i+1}b_{2i+2} + \measuredangle ab_{2i+2}b_{2i+1} \right)
    &>  \sum\nolimits_{i=0}^p \left( \measuredangle ab_{2i}b_{2i+1} + \measuredangle ab_{2i+1}b_{2i} \right) ,
    \label{eqn:bwanglesums}
  \end{align}
  in which the indices are modulo $2p$.
  The sum of black angles at $a$ is $p\pi$ minus the first sum in \eqref{eqn:bwanglesums}, and the sum of white angles at $a$ is $p \pi$ minus the second sum in \eqref{eqn:bwanglesums}.
  Therefore the sum of black angles at $a$ is less then the sum of white angles at $a$.    
\end{proof}

\begin{lemma}[Local Angle Property and Aging Function]
\label{lem:local_and_aging}
  Let $A \subseteq \Rspace^2$ be finite and generic, $P_k$ a level-$k$ hypertriangulation of $A$, and $P_{k-1} = F^{-1} (\Black{P_k})$ a level-$(k-1)$ hypertriangulation of $A$.
  If $P_k$ has the local angle property, then $P_{k-1}$ satisfies \textsc{(ww)}.
\end{lemma}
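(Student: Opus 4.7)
The plan is to translate the \textsc{(ww)} condition in $P_{k-1}$ into a condition on black angles in $P_k$ via the aging function, and then invoke Lemma~\ref{lem:total_black_angles}.

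Let $T_1, T_2$ be two white triangles in $P_{k-1}$ sharing an edge. Writing their vertex labels as subsets of $A$ of size $k-1$, there exist $X \subseteq A$ with $\card{X}=k-2$ and distinct points $a,b,c,d \in A \setminus X$ such that $T_1$ has vertices $\Label{X\!a},\Label{X\!b},\Label{X\!c}$ and $T_2$ has vertices $\Label{X\!a},\Label{X\!b},\Label{X\!d}$, with shared edge $\Label{X\!a}\Label{X\!b}$. The two angles opposite the shared edge sit at $\Label{X\!c}$ in $T_1$ and $\Label{X\!d}$ in $T_2$. Under the aging function, these triangles map to the black triangles $F(T_1)$ with vertices $\Label{X\!ab},\Label{X\!ac},\Label{X\!bc}$ and $F(T_2)$ with vertices $\Label{X\!ab},\Label{X\!ad},\Label{X\!bd}$ in $P_k$, which meet at the common vertex $\Label{X\!ab}$.

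The key geometric observation is that both $T_1$ and $F(T_1)$ are similar to the underlying triangle $abc$: directly computing vector differences shows that $T_1$ is the image of $abc$ under a homothety of ratio $\tfrac{1}{k-1}$, while $F(T_1)$ is the image of $abc$ under a homothety of ratio $-\tfrac{1}{k}$. In particular, the correspondence in $F(T_1)$ sends $\Label{X\!ab}\leftrightarrow c$, so the angle of $F(T_1)$ at $\Label{X\!ab}$ equals the angle of $\triangle abc$ at $c$, which equals the angle of $T_1$ at $\Label{X\!c}$. By the same argument applied to $T_2$ with the point $d$ in place of $c$, the angle of $F(T_2)$ at $\Label{X\!ab}$ equals the angle of $T_2$ at $\Label{X\!d}$.

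Now apply Lemma~\ref{lem:total_black_angles} at the vertex $\Label{X\!ab}$ in $P_k$: since $P_k$ has the local angle property, the sum of all black angles at $\Label{X\!ab}$ is less than $\pi$. Since $F(T_1)$ and $F(T_2)$ are distinct black triangles of $P_k$ meeting at $\Label{X\!ab}$, their contributions are bounded above by this total, so the sum of their angles at $\Label{X\!ab}$ is strictly less than $\pi$. By the angle identification in the previous paragraph, the sum of the angles of $T_1$ at $\Label{X\!c}$ and of $T_2$ at $\Label{X\!d}$ is less than $\pi$, which is exactly \textsc{(ww)} for the edge $\Label{X\!a}\Label{X\!b}$. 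Since this edge was arbitrary, $P_{k-1}$ satisfies \textsc{(ww)}.

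The main (and only real) obstacle is verifying the angle correspondence between the white triangle $T_i$ and its black image $F(T_i)$; this is a short affine computation but is the content-bearing step, since everything else is a direct application of Lemma~\ref{lem:total_black_angles}.
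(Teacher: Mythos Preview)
Your proof is correct and follows essentially the same approach as the paper: both translate the shared edge of two white triangles in $P_{k-1}$ into a shared vertex of the corresponding black triangles in $P_k$, identify the opposite angles via the homothety between $abc$ and its scaled/inverted copies, and then invoke Lemma~\ref{lem:total_black_angles}. The only cosmetic difference is the labeling of the shared edge (you use $\Label{X\!a}\Label{X\!b}$, the paper uses $\Label{X\!b}\Label{X\!c}$), and you spell out the homothety ratios explicitly where the paper simply asserts the triangles are homothetic.
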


\begin{proof}
  We consider two adjacent white triangles with vertices $\Label{X\!a}$, $\Label{X\!b}$, $\Label{X\!c}$ and $\Label{X\!b}$, $\Label{X\!c}$, $\Label{X\!d}$ in $P_{k-1}$.
  Applying the aging function, we get two black triangles of $P_k$ with vertices $\Label{X\!ab}$, $\Label{X\!ac}$, $\Label{X\!bc}$ and $\Label{X\!bc}$, $\Label{X\!bd}$, $\Label{X\!cd}$. 
  They share $\Label{X\!bc}$, which implies that the sum of their angles at this vertex is less than $\pi$ by Lemma~\ref{lem:total_black_angles}.
  The two black triangles are homothetic copies of $abc$ and $bcd$, and so are the corresponding two white triangles in $P_{k-1}$, so
  \textsc{(ww)} follows.
\end{proof}

\subsection{Level-2 Hypertriangulations}
\label{sec:5.2}

We are now ready to confirm the Local Angle Conjecture for level-$2$ hypertriangulations.
\begin{theorem}[Local Angle Conjecture for Level 2]
  \label{thm:level-2_hypertriangulations_have_lap}
  Let $A \subseteq \Rspace^2$ be finite and generic, and let $P_2$ be a maximal level-$2$ hypertriangulation of $A$. 
  Then $P_2$ has the local angle property iff it is the order-$2$ Delaunay triangulation of $A$.
\end{theorem}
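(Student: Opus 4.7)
The forward direction is immediate: by Theorem~\ref{thm:order-k_Delaunay_triangulations_have_lap}, $\Delaunay{2}{A}$ satisfies the local angle property, and being complete it is maximal by Lemma~\ref{lem:complete_implies_maximal}.

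For the converse, let $P_2$ be a maximal level-$2$ hypertriangulation with LAP. By Lemma~\ref{lem:aging_function_for_triangulations}(2), there is a unique (possibly partial) triangulation $P$ of $A$, with vertex set $B \subseteq A$ containing $H$, such that $P_2 = F(P)$. Lemma~\ref{lem:local_and_aging} applied with $k=2$ shows that $P$ satisfies \textsc{(ww)}, so Lawson's classical result gives $P = \Delaunay{}{B}$. The crux is to show $B = A$. Once that is accomplished, the black triangles of $P_2$ coincide with those of $\Delaunay{2}{A}$, and inside each white region $W_y = \Label{\Star{P,y},y} \cap \conv{A^{(2)}}$, condition \textsc{(ww)} together with the generic uniqueness of constrained Delaunay triangulations (Section~\ref{sec:3.4}) forces the white triangulation to match that of $\Delaunay{2}{A}$ in $W_y$, since the latter satisfies the same conditions by Theorem~\ref{thm:order-k_Delaunay_triangulations_have_lap}. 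Hence $P_2 = \Delaunay{2}{A}$.

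To prove $B = A$, suppose for contradiction that $B \subsetneq A$ and choose $x \in A \setminus B$. Since $H \subseteq B$, $x$ is interior to $\conv{A}$ and lies in the interior of some triangle $abc \in P = \Delaunay{}{B}$. Because $x \notin B$ and $abc$ is Delaunay for $B$, the circumcircle of $abc$ strictly encloses $x$, giving $\measuredangle bxc > \measuredangle bac$. Consider the black medial triangle $T_b = \Label{ab}\Label{ac}\Label{bc} \in P_2$; since $T_b$ is related to $abc$ by a homothety of ratio $-\tfrac{1}{2}$, its angle at $\Label{bc}$ equals $\measuredangle bac$. The unique white triangle of $P_2$ sharing the edge $\Label{ab}\Label{ac}$ with $T_b$ must be of the form $\Label{ab}\Label{ac}\Label{az}$ with $z \in A \setminus \{a,b,c\}$---the only third-vertex label admissible by Definition~\ref{def:hypertriangulations}---and being a half-scale homothetic copy of $bcz$ centered at $a$, it has angle $\measuredangle bzc$ at $\Label{az}$. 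Condition \textsc{(bw)} forces $\measuredangle bzc < \measuredangle bac$, so $z$ must lie strictly outside the circumcircle of $abc$. On the other hand, by maximality, $\Label{ax}$ is a vertex of $P_2$; it lies in the convex sub-region $a\Label{ab}\Label{ac} \cap \conv{A^{(2)}} \subseteq W_a$, from which it is visible across the edge $\Label{ab}\Label{ac}$ inside $W_a$. Because \textsc{(ww)} inside $W_a$ makes its triangulation the constrained Delaunay triangulation, the third vertex on the boundary edge $\Label{ab}\Label{ac}$ is selected by the empty-circumcircle rule---equivalently, by maximizing the angle subtended at $\Label{ab}\Label{ac}$ among visible candidates. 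Since $\Label{ax}$ is a visible candidate subtending angle $\measuredangle bxc > \measuredangle bac$, the chosen vertex $\Label{az}$ satisfies $\measuredangle bzc \geq \measuredangle bxc > \measuredangle bac$, contradicting \textsc{(bw)}. Hence $B = A$.

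The main difficulty is this last step, which requires carefully combining the \textsc{(ww)}-induced constrained Delaunay structure inside $W_a$ with the opposing \textsc{(bw)} constraint across the boundary edge $\Label{ab}\Label{ac}$. The bookkeeping I expect to require care is: verifying that $\Label{ab}\Label{ac}$ is in fact a boundary edge of $W_a$ (so that \textsc{(bw)}, not \textsc{(ww)}, is the applicable condition); checking that $\Label{ax}$ lies in $W_a$ and is visible from $\Label{ab}\Label{ac}$ inside $W_a$; and observing that any additional interior vertices of $W_a$ coming from further missing points of $A \setminus B$ only strengthen rather than weaken the contradiction.
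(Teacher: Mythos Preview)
Your proposal is correct and follows essentially the same argument as the paper: pull $P_2$ back via the inverse aging function to a level-$1$ triangulation $P$, use \textsc{(ww)} inside white regions to recognize them as constrained Delaunay, use Lemma~\ref{lem:local_and_aging} to identify $P$ as Delaunay, and rule out missing points by pitting the \textsc{(bw)} constraint across $\Label{ab}\Label{ac}$ against the angle-maximizing property of the constrained Delaunay vertex $\Label{az}$, with $\Label{ax}$ as the visible witness.

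One small logical slip: the sentence ``Because $x \notin B$ and $abc$ is Delaunay for $B$, the circumcircle of $abc$ strictly encloses $x$'' is not a valid inference---the Delaunay property for $B$ only says the circumcircle avoids points \emph{of} $B$, not that it contains points outside $B$. What you actually need, and what the paper uses, is simply that $x$ lies in the interior of the triangle $abc$, which immediately gives $\measuredangle bxc > \measuredangle bac$ (and in particular puts $x$ inside the circumcircle). With that correction your argument goes through; your ordering (apply Lemma~\ref{lem:local_and_aging} first, then prove $B=A$) differs from the paper's (prove completeness first, then apply the lemma) but is immaterial.
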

\begin{proof}
  No two black triangles in $P_2$ share an edge, which implies that \textsc{(bb)} is void.
  On the other hand, there are pairs of adjacent white triangles that belong to the triangulation of white regions in $P_2$.
  In complete level-$2$ hypertriangulations, each such region is a polygon without points (vertices) inside, but in the more general case of maximal level-$2$ hypertriangulations considered here, there may be such points or vertices.
  In either case, \textsc{(ww)} implies that the restriction of $P_2$ to each white region is the constrained Delaunay triangulation of this region.

  \smallskip
  Let $P$ be the underlying (order-1) triangulation of $A$, which consists of the images of the black triangles in $P_2$ under the inverse aging function.
  We begin by establishing that $P$ is maximal and therefore $P_2$ is complete. 
  Suppose $x \in A$ is not a vertex of $P$, and let $abc$ be the triangle in $P$ that contains $x$ in its interior.
  Consider the triangle with vertices $c' = \Label{ab}$, $b' = \Label{ac}$, and $a' = \Label{bc}$ in $\Black{P_2}$. 
  The edge connecting $b'$ and $c'$ is shared with $\Label{\white{P_2,a}}$, and this white region contains $x' = \Label{ax}$.
  Since $P_2$ is maximal, by assumption, $x'$ is a vertex of the restriction of $P_2$ to this white region.
  Let $d'$ be the vertex of $P_2$ such that $b'd'c'$ is a triangle of $\White{P_2,a}$. 
  This triangle belongs to the constrained Delaunay triangulation of the white region, because this is the only triangulation that satisfies the \textsc{(ww)} property for all interior edges. 
  Recall that the triangle $b'yc'$ in the constrained Delaunay triangulation has the property that the angle at $y$ is maximal over all possible choices of $y\in A$ in the region visible from $b'$ and $c'$.
  Hence, $\measuredangle b'd'c' \geq \measuredangle b'x'c'$, as $x'$ is visible from $b'$ and $c'$, because the whole triangle $b'x'c'$ lies in the white region,
  but also $\measuredangle b'x'c' = \measuredangle bxc > \measuredangle bac = \measuredangle b'a'c'$, because $x$ is inside $abc$.
  This implies $\measuredangle b'd'c' > \measuredangle b'a'c'$, which contradicts \textsc{(bw)} for $P_2$, so $P$ is necessarily maximal.
  
  \Skip{Recall that the triangle $b'd'c'$ in the constrained Delaunay triangulation of the white region has the property that the angle at $d'$ is maximal over all possible choices of $d'$ visible from $b'$ and $c'$.
  Hence, $\measuredangle b'd'c' \geq \measuredangle b'x'c'$,
  but also $\measuredangle b'x'c' = \measuredangle bxc > \measuredangle bac = \measuredangle b'a'c'$ because $x$ is inside $abc$.
  This implies $\measuredangle b'd'c' > \measuredangle b'a'c'$, which contradicts \textsc{(bw)} for $P_2$, so $P$ is necessarily maximal.}

  \smallskip
  Applying Lemma \ref{lem:local_and_aging} to $P_2$, we conclude that $P$ satisfies \textsc{(ww)}. 
  Since $P$ is maximal, the only choice left is that $P$ is the Delaunay triangulation of $A$.
  The black triangles in $P_2$ thus coincide with the black triangles in the order-$2$ Delaunay triangulation of $A$, and $P_2$ restricted to each of its white regions is the constrained Delaunay triangulation of this region.
  Hence, $P_2$ is the order-$2$ Delaunay triangulation of $A$. 
\end{proof}

\subsection{Level-3 Hypertriangulations}
\label{sec:5.3}

We say $A \subseteq \Rspace^2$ is in \emph{convex position} if all its points are vertices of $\conv{A}$.
For such sets, we can extend Theorem~\ref{thm:level-2_hypertriangulations_have_lap} to level-$3$ hypertriangulations.
The main differences to general finite sets are that all triangulations have the same number of triangles, and the aging function exists, as established by Galashin in \cite{Gal18} but see also \cite{EGGHS23}.
We use this function together with the characterization of the order-$2$ Delaunay triangulation as the only level-$2$ hypertriangulation
that has the local angle property.
\begin{theorem}[Local Angle Conjecture for Level 3]
  \label{thm:level-3_hypertriangulations_and_convex_position}
  Let $A \subseteq \Rspace^2$ be finite, generic, and in convex position, and let $P_3$ be a hypertriangulation of $A$.
  Then $P_3$ has the local angle property iff it is the order-$3$ Delaunay triangulation of $A$.
\end{theorem}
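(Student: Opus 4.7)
The ``if'' direction is immediate from Theorem~\ref{thm:order-k_Delaunay_triangulations_have_lap} with $k=3$. For the converse, suppose $P_3$ has the local angle property. Since $A$ is in convex position, Galashin~\cite{Gal18} ensures that the aging function is a bijection at every level, so there exist unique hypertriangulations $P_2$ and $P$ with $F(P_2) = P_3$ and $F(P) = P_2$. The plan is to show that $P_2$ inherits the local angle property; Theorem~\ref{thm:level-2_hypertriangulations_have_lap} then yields $P_2 = \Delaunay{2}{A}$, and $P_3 = F(\Delaunay{2}{A}) = \Delaunay{3}{A}$ by the identity noted after Definition~\ref{def:order-k_Delaunay_triangulation}, completing the proof.

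Two of the three conditions for $P_2$ come for free: \textsc{(bb)} is vacuous because the black triangles of $P_2 = F(P)$ are medial inverted copies of the triangles of $P$ and pairwise share at most a single vertex, and \textsc{(ww)} is Lemma~\ref{lem:local_and_aging} applied to $P_3$. The substantive step is \textsc{(bw)}. A generic \textsc{(bw)} pair in $P_2$ consists of a black triangle $T_B = \Label{ab}\,\Label{ac}\,\Label{bc}$ coming from some $abc \in P$ and a white triangle $T_W = \Label{ab}\,\Label{ac}\,\Label{ax}$ in the constrained Delaunay triangulation of the white region at $a$, sharing the edge $\Label{ab}\Label{ac}$. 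Via the similarities $T_B \sim abc$ (with inversion) and $T_W \sim bcx$, the condition \textsc{(bw)} reduces to the angle inequality $\measuredangle bac > \measuredangle bxc$, which I plan to derive by transferring the question to the vertex $\Label{abc}$ of $P_3$.

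Under aging, $T_W$ maps to the black triangle $F(T_W) = \Label{abc}\,\Label{abx}\,\Label{acx}$ of $P_3$, a negatively homothetic copy of $bcx$ whose angle at $\Label{abc}$ equals $\measuredangle bxc$. A label-counting argument (using that every black triangle of $P_3$ is aged from a white triangle of $P_2$) shows that the aged black triangles of $P_3$ incident to $\Label{abc}$ correspond bijectively to the boundary edges $\Label{ab}\Label{ac}$, $\Label{ab}\Label{bc}$, $\Label{ac}\Label{bc}$ of the three white regions at $a,b,c$ facing the triangle $abc$; for $abc$ in the interior of $\conv{A}$ there are exactly three such triangles, contributing the three angles $\measuredangle bxc, \measuredangle ayc, \measuredangle azb$ at $\Label{abc}$ for appropriate CDT choices $y,z$. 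Lemma~\ref{lem:total_black_angles} then gives the aggregate bound $\measuredangle bxc + \measuredangle ayc + \measuredangle azb < \pi = \measuredangle bac + \measuredangle abc + \measuredangle acb$, and chaining this with the \textsc{(bw)} inequalities of $P_3$ at the white triangles interleaving these three aged black triangles around $\Label{abc}$ separates the contribution of $\measuredangle bxc$ and yields the target $\measuredangle bxc < \measuredangle bac$. The hard part will be precisely this chaining: the aggregate Lemma~\ref{lem:total_black_angles} bound matches $\pi$ only in total, so careful combinatorial bookkeeping of the cyclic order of the six edges emanating from $\Label{abc}$ in $P_3$, and of the interleaving white triangles, is required, with additional care when $a$, $b$, or $c$ lies on the boundary of $\conv{A}$, since then the relevant white regions are proper subsets of the corresponding shrunken stars and some of the three aged black triangles at $\Label{abc}$ may be absent.
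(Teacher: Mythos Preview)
Your overall architecture matches the paper's: reduce to showing that $P_2 = F^{-1}(\Black{P_3})$ has the local angle property, then invoke Theorem~\ref{thm:level-2_hypertriangulations_have_lap}. You handle \textsc{(bb)} (vacuous) and \textsc{(ww)} (Lemma~\ref{lem:local_and_aging}) exactly as the paper does. There is a small omission at the end: from $P_2=\Delaunay{2}{A}$ you conclude $P_3=F(\Delaunay{2}{A})=\Delaunay{3}{A}$, but aging only pins down the \emph{black} triangles of $P_3$; you still need to observe that $P_3$ satisfies \textsc{(ww)}, so each white region of $P_3$ is its constrained Delaunay triangulation and therefore agrees with $\Delaunay{3}{A}$. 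The paper makes this explicit.

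The real gap is your treatment of \textsc{(bw)} for $P_2$. Your plan is to look at the vertex $\Label{abc}$ of $P_3$, identify the (at most) three black triangles there with angles $\measuredangle bxc$, $\measuredangle ayc$, $\measuredangle azb$, apply Lemma~\ref{lem:total_black_angles} to get $\measuredangle bxc+\measuredangle ayc+\measuredangle azb<\pi=\measuredangle bac+\measuredangle abc+\measuredangle acb$, and then ``chain'' with the \textsc{(bw)} inequalities of $P_3$ to isolate $\measuredangle bxc<\measuredangle bac$. But the aggregate bound does not split: it is perfectly consistent with $\measuredangle bxc>\measuredangle bac$ provided the other two terms are small enough. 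And the \textsc{(bw)} inequalities of $P_3$ at the edges incident to $\Label{abc}$ compare angles \emph{opposite} those edges (e.g.\ $\measuredangle xbc$ on the black side versus an angle at some $\Label{abx'}$ on the white side), not the angles \emph{at} $\Label{abc}$ that you need. You acknowledge that the chaining is ``the hard part'' but do not supply it, and I do not see a way to make a direct vertex-by-vertex argument of this kind go through.

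The paper proceeds quite differently for \textsc{(bw)}: it argues by contradiction. Assuming some black/white pair in $P_2$ violates \textsc{(bw)} (so $\measuredangle bac<\measuredangle bdc$), it passes to the convex polygon on $a,x_1,\ldots,x_p$ formed by the edges of $\Star{P,a}$ crossing $bd$, observes that $bcd$ sits in the Delaunay triangulation $\Delta$ of that polygon, and then picks a \emph{second ear} $uvw=x_{i-1}x_ix_{i+1}$ of $\Delta$. The ear property forces $a$ to lie outside the circumcircle of $uvw$, so \emph{both} \textsc{(bw)} conditions at $\Label{au}\Label{av}$ and at $\Label{av}\Label{aw}$ fail simultaneously. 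Aging these two white triangles (together with the white triangle(s) at $\Label{av}$ in the white region of $v$) produces either two edge-adjacent black triangles in $P_3$ violating \textsc{(bb)}, or a white $q$-gon in $P_3$ whose constrained Delaunay triangle on the edge $\Label{auv}\Label{awv}$ contradicts the fan triangulation at $\Label{av}$ in $P_2$. The point is that the contradiction exploits a \emph{pair} of simultaneous \textsc{(bw)} failures manufactured by the ear, not a single one; your direct approach has no analogue of this step.
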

\begin{proof}
  By Theorem~\ref{thm:order-k_Delaunay_triangulations_have_lap}, the order-$3$ Delaunay triangulation has the local angle property.
  Let $P_3$ be a possibly different level-$3$ hypertriangulation that also has the local angle property, and let $P_2 = F^{-1} (\Black{P_3})$, which exists because $A$ is in convex position \cite{Gal18}.
  By Lemma~\ref{lem:local_and_aging}, $P_2$ satisfies \textsc{(ww)}.
  Recall that \textsc{(bb)} is void for level-$2$ hypertriangulations, so if in addition to \textsc{(ww)}, $P_2$ also satisfies \textsc{(bw)}, then it has the local angle property.
  By Theorem~\ref{thm:level-2_hypertriangulations_have_lap}, this implies that $P_2$ is the order-$2$ Delaunay triangulation of $A$.
  Its white triangles are in bijection with the triplets of points whose circumcircles enclose exactly one point of $A$, and since $\Black{P_3} = F(\White{P_2})$, so are the black triangles of $P_3$.
  Thus, $P_3$ has the same black triangles as the order-$3$ Delaunay triangulation of $A$.
  Furthermore, the white regions of $P_3$ coincide with the white regions of the order-$3$ Delaunay triangulation, and because the restriction of either triangulation to a white region is the constrained Delaunay triangulation of that region, we conclude that $P_3$ \emph{is} the order-$3$ Delaunay triangulation of $A$.
  \smallskip

  \begin{figure}[hbt]
    \centering
    \vspace{0.0in}
    \resizebox{!}{2.6in}{\input{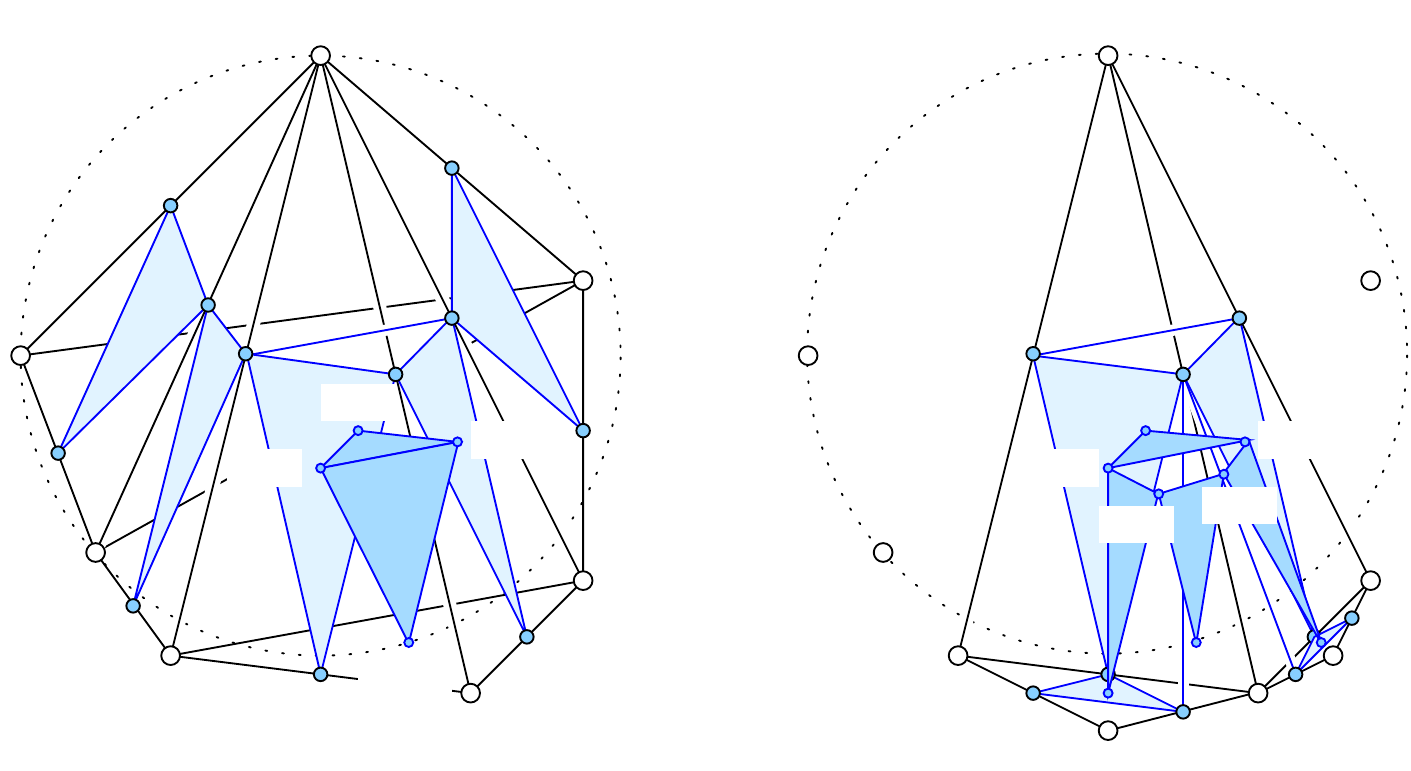_t}}
    \vspace{-0.0in}
    \caption{
    The superposition of three levels.
    \emph{Left:} part of the star of $a$ in $P$ on level $1$, the (white) triangles in this star aging to black triangles in $P_2$ on level $2$, and the only two white triangles in the star of $\Label{av}$ aging to two black triangles in $P_3$ on level $3$.
    One is similar to $uvw$ and the other to $auw$, which is assumed to be unique.
    \emph{Right:} compared to the configuration on the \emph{left}, there are two extra white triangles, which increase the star of $\Label{av}$ in $P_2$ from two to four triangles.
    Accordingly, we see a white quadrangle on level $3$.}
    \label{fig:convexposition}
  \end{figure}
  It remains to show that $P_2$ indeed satisfies \textsc{(bw)}.
  To derive a contradiction, we assume it does not.
  Let $\Label{ab}$, $\Label{ac}$, $\Label{bc}$ and $\Label{ab}$, $\Label{ac}$, $\Label{ad}$ be the vertices of a black triangle and an adjacent white triangle that violate \textsc{(bw)}, so $\measuredangle{bac} < \measuredangle bdc$.
  Let $P = F^{-1} (\Black{P_2})$, and consider the star of $a$ in $P$.
  All vertices are in convex position, including $a, b, c, d$, so we may assume that $ac$ crosses $bd$, as in Figure~\ref{fig:convexposition} on the left.
  Let $ax_1 = ab, ax_2 = ac, \ldots, ax_p = ad$ be the sequence of edges in the star of $a$ that intersect $bd$.
  We consider the polygon with vertices $a, x_1, x_2, \ldots, x_p$.
  Since $A$ is in convex position, the polygon is convex, which implies that its constrained Delaunay triangulation is also the Delaunay triangulation of the $p+1$ points.
  Denote this Delaunay triangulation by $\Delta$, and note that it includes $bcd = x_1x_2x_p$:  $a$ is outside the circumcircle of $bcd$, because $abc$ and $bcd$ violate \textsc{(bw)}, and so is every $x_i$ with $3 \leq i \leq p-1$, because $bcd$ is a triangle in $\White{P_2, a}$.
  The rest of $\Delta$ consists of $abd = ax_1x_p$ and the triangles of $\White{P_2, a}$ on the other side of $x_2 x_p$.
  An \emph{ear} of $\Delta$ is a triangle that has two of its edges in the boundary of the polygon.
  For example, $ax_1x_p$ is an ear, but since every triangulation of a polygon with at least four vertices has at least two ears, there is another one, and we write $uvw = x_{i-1}x_ix_{i+1}$ for a second ear of $\Delta$.
  The corresponding triangle in $P_2$ has vertices $\Label{au}$, $\Label{av}$, $\Label{aw}$ and is adjacent to black triangles with vertices $\Label{au}$, $\Label{av}$, $\Label{uv}$ and $\Label{av}$, $\Label{aw}$, $\Label{vw}$.
  Both pairs violate \textsc{(bw)} because $a$ lies outside the circumcircle of $uvw$.
  Looking closely at this configuration, we note that $\Label{av}$ is shared by the two black triangles and also belongs to $\Label{\white{P_2,a}}$ and $\Label{\white{P_2,v}}$; see again Figure~\ref{fig:convexposition} on the left.
  We distinguish between two cases: when $\Label{av}$ belongs to only one triangle in the triangulation of the latter white region, and when it belongs to two or more such triangles.

  Assuming the first case, we apply the aging function to the two white triangles sharing $[av]$, which gives two black triangles with vertices $\Label{auv}$, $\Label{auw}$, $\Label{awv}$ and $\Label{auv}$, $\Label{awv}$, $\Label{uwv}$ in $P_3$.
  They share an edge, and since $a$ lies outside the circumcircle of $uvw$, they violate \textsc{(bb)}, which is the desired contradiction.

  \smallskip
  There is still the second case, when $\Label{av}$ belongs to two or more triangles in the triangulation of $\Label{\white{P_2,v}}$.
  Let $\Label{uv} = \Label{y_1v}, \Label{y_2v}, \ldots, \Label{y_qv} = \Label{wv}$ be the vertices of $\Label{\white{P_2,v}}$ connected to $\Label{av}$; see Figure~\ref{fig:convexposition} on the right.
  These $q$ edges bound $q-1$ white triangles in $P_2$.
  Consider their images under the aging function, which are $q-1$ black triangles in $P_3$.
  Together with the black triangle with vertices $\Label{auv}$, $\Label{auw}$, $\Label{awv}$, these black triangles surround a convex $q$-gon with vertices $\Label{auv} = \Label{ay_1v}, \Label{ay_2v}, \ldots, \Label{ay_qv} = \Label{awv}$; see again Figure~\ref{fig:convexposition} on the right.
  The $q$-gon is convex because $A$ is in convex position, and we claim it is a white region in $P_3$.
  If there is any black triangle, $T$, inside this $q$-gon, then we consider any generic segment connecting $T$ to the boundary of the $q$-gon, and the closest part of that segment to the boundary colored black in $P_3$.
  By construction, the triangle $T'$ containing this part has two vertices labeled $\Label{avz_1}$ and $\Label{avz_2}$, for some $z_1$ and $z_2$. 
  Hence, $F^{-1}(T')$ is a white triangle of $P_2$ incident to $\Label{av}$, which is impossible, as all white triangles in $P_2$ incident to $\Label{av}$ age to black triangles surrounding the $q$-gon.
  Recall that $P_3$ satisfies \textsc{(ww)}, so the restriction of $P_3$ to the $q$-gon is the (constrained) Delaunay triangulation of the $q$-gon.

  Consider the edge connecting $\Label{auv} = \Label{ay_1v}$ and $\Label{awv} = \Label{ay_qv}$ of the $q$-gon, and let $\Label{ay_iv}$ be the third vertex of the incident white triangle.
  Because this triangle is part of the (constrained) Delaunay triangulation, we have $\measuredangle uy_jw < \measuredangle uy_iw$ for all $j \neq i$, and because $P_3$ satisfies \textsc{(bw)}, we have $\measuredangle uy_iw < \measuredangle uvw$.
  Recall that $a$ lies outside the circumcircle of $uvw$, so $\measuredangle uvw + \measuredangle uaw < \pi$.
  This implies $\measuredangle uy_iw + \measuredangle uaw < \pi$.
  Hence, the circumcircle of the triangle with vertices $\Label{uv}, \Label{y_iv}, \Label{wv}$ does not enclose any of the other vertices.
  It follows that the triangle belongs to the constrained Delaunay triangulation of the polygon with vertices $\Label{uv} = \Label{y_1v}, \Label{y_2v}, \ldots, \Label{y_qv} = \Label{wv}$, but it does not because this polygon is triangulated with edges that all share $\Label{av}$.
  This gives the final contradiction.
\end{proof}

\section{Concluding Remarks}
\label{sec:6}

In this last section, we discuss open questions about hypertriangulations.
The obvious one is whether optimality properties other than angles can be generalized from level $1$ to higher levels: for example the smallest circumcircle \cite{DASi89}, the smallest enclosing circle \cite{Raj94}, roughness \cite{Rip90}, and other functionals \cite[Chapter 3]{DRS10} and \cite{Mus03}, which are all optimized by the order-$1$ Delaunay triangulation.
In addition, we list a small number of more specific questions and conjectures directly related to the discussions in the technical sections of this paper.

\medskip \noindent \textbf{Flipping as a proof technique.}
Sibson's original proof for the angle vector optimality of the Delaunay triangulation \cite{Sib78} uses the sequence of edge-flips provided by Lawson's algorithm~\cite{Law77}.
There is such a sequence for every complete triangulation, and each flip lexicographically increases the vector.
The authors of this paper pursued a similar approach to prove Theorem~\ref{thm:angle_vector_optimality} using the flips of Types~I to IV developed in \cite{EGGHS23}; see Figure~\ref{fig:no-compound} on the right.
While these flips connect all level-$2$ hypertriangulations of a finite generic set (Theorem~4.4 in \cite{EGGHS23}), they do not necessarily lexicographically increase the angle vector.

Indeed, there is a level-$2$ hypertriangulation of six points, $Q_2$, different from the order-$2$ Delaunay triangulation, such that every applicable flip lexicographically decreases the sorted angle vector.
The six points in this example are $a, b, c, g, h, i$ in Figure~\ref{fig:no-compound}, and we obtain $Q_2$ from the shown hypertriangulation by removing the vertices $\Label{ad}, \Label{dg}, \Label{be}, \Label{eh}, \Label{cf}, \Label{fi}$.
In $Q_2$, there are only three possible flips, all of Type~I, and all three lexicographically decrease the sorted angle vector.
Incidentally, six is the smallest number of points for which such a counterexample to using flips as a proof technique for level-$2$ hypertriangulations exists.
\begin{figure}[hbt]
  \centering
  \resizebox{!}{2.5in}{\input{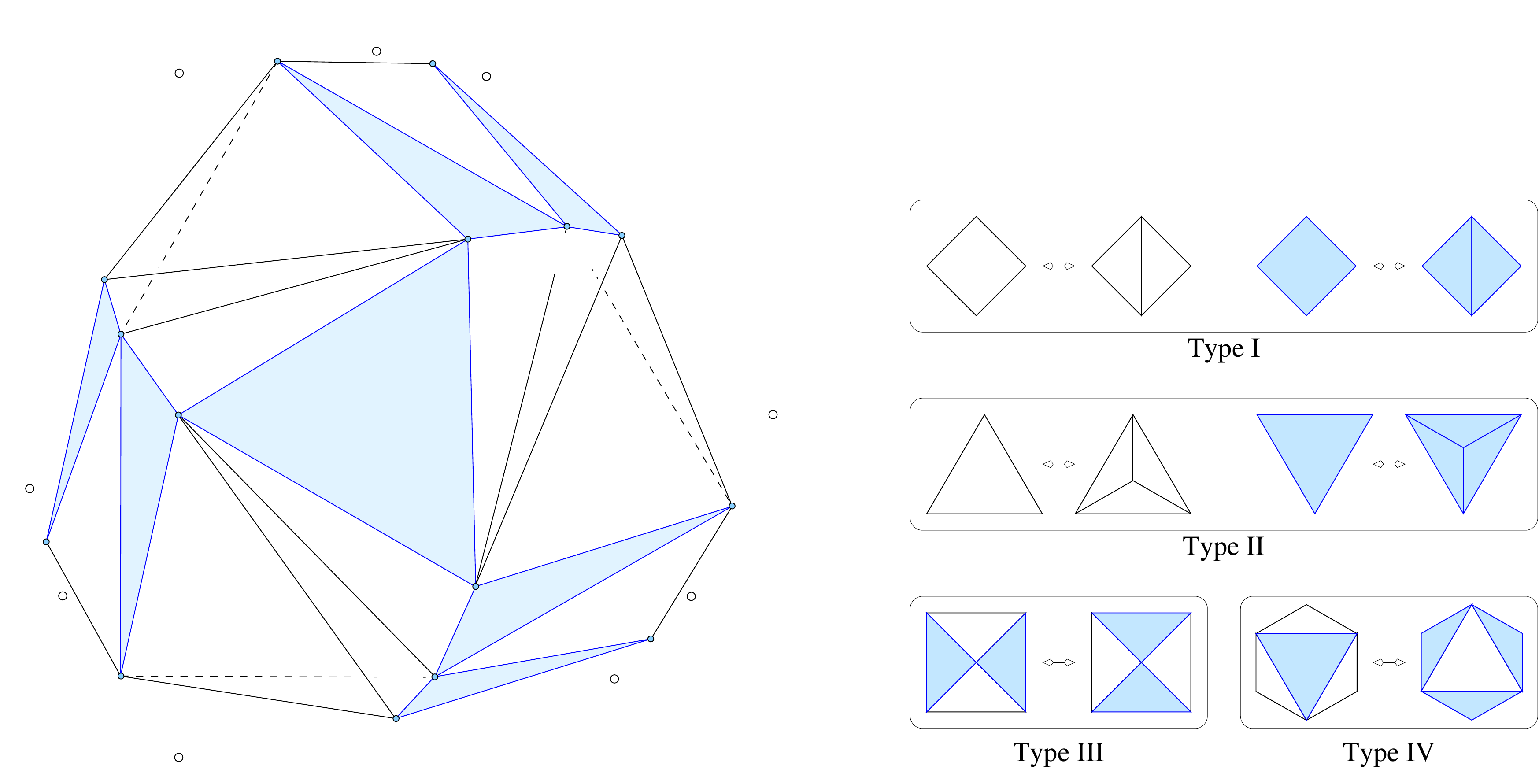_t}}
  \caption{\emph{Right:} the four types of flips that connect the level-$2$ hypertriangulations of a given set. 
  \emph{Left:} a complete level-$2$ hypertriangulation such that every applicable compound flip decreases the sorted angle vector.
  The \emph{dashed} edges appear after removing vertices $\Label{ad}, \Label{dg}, \Label{be}, \Label{eh}, \Label{cf}, \Label{fi}$.}
  \label{fig:no-compound}
\end{figure}

Let $P_2$ be the level-$2$ hypertriangulation in Figure~\ref{fig:no-compound} (without removing points $d, e, f$).
It provides a counterexample to using a local retriangulation operation more powerful than a flip as a proof technique.
To explain, let $P$ and $P'$ be two complete level-$1$ hypertriangulations of the same set.
Let $P_2 = F(P)$ and $P_2' = F(P')$ be the aged level-$2$ hypertriangulations such that the restriction to any white region is the constrained Delaunay triangulation of that region.
Equivalently, $P_2$ and $P_2'$ satisfy \textsc{(ww)}.
If $P$ and $P'$ are connected by a single flip of Type~I, we say that $P_2$ and $P_2'$ are connected by a \emph{compound flip}.
It consists of a sequence of Type~I flips affecting white regions in $P_2$, followed by a Type~III flip, followed by a sequence of Type~I flips affecting white regions in $P_2'$.
Such a compound flip may increase the sorted angle vector even if some of its elementary flips do not.
Nevertheless, all compound flips applicable to $P_2$ in Figure~\ref{fig:no-compound} decrease the sorted angle vector, thus spoiling the hope for an elegant proof of Theorem~\ref{thm:angle_vector_optimality} using compound flips.
This motivates the following question.
\begin{question}
  \label{qu:A}
  Does there exist a flip-like approach to proving Theorem~\ref{thm:angle_vector_optimality} on the angle vector optimality for complete level-$2$ hypertriangulations?
\end{question}

\medskip \noindent \textbf{Angle vector optimality and local angle property.}
Recall that Theorem~\ref{thm:angle_vector_optimality} proves the optimality of the Delaunay triangulation only for order-$2$ and among all complete level-$2$ hypertriangulations.
Indeed, Section~\ref{sec:4.4} shows counterexamples for order-$3$ and for relaxing to maximal level-$2$ hypertriangulations.
This motivates the following two questions:
\begin{itemize}
  \item Is there a sense in which the order-$k$ Delaunay triangulations optimize angles for all $k$?
  \item Among all maximal level-$2$ hypertriangulations, which one lexicographically maximizes the sorted angle vector?
\end{itemize}
Recall also that Theorem~\ref{thm:level-2_hypertriangulations_have_lap} proves that the local angle property characterizes the order-$2$ Delaunay triangulation among all maximal level-$2$ hypertriangulations, leaving the case of higher orders open.
We venture the following conjecture, while keeping in mind that some condition on the family of competing hypertriangulations is needed to avoid Delaunay triangulations of proper subsets of the given points.
\begin{conjecture_new}[Local Angle Conjecture]
  \label{conj:B}
  Let $A \subseteq \Rspace^2$ be finite and generic, and for every $1 \leq k \leq \card{A}-1$ let $\Fcal_k$ be the family of level-$k$ hypertriangulations that have the local angle property.
  Then $P_k \in \Fcal_k$ has the maximum number of triangles iff $P_k$ is the order-$k$ Delaunay triangulation of $A$.
\end{conjecture_new}
In the formulation of this conjecture, we maximize the number of triangles over all members of $\Fcal_k$, and not over all level-$k$ hypertriangulations of $A$, because the latter may not contain any that have the local angle property.
To see this, let $A$ be any finite set that is not in convex position.
For $k = \card{A}-1$, all triangles are black, and by Lemma~\ref{lem:black_regions_are_convex}, condition \textsc{(bb)} of the local angle property implies that no point in the interior of $\conv{A}$ is a vertex of the triangulation.
Thus every hypertriangulation on this level that has the local angle property does not have the maximum number of triangles.
Also note that Theorem~\ref{thm:level-3_hypertriangulations_and_convex_position} shows that the conjecture holds for the case $k=3$ and points in convex position.
More generally, for such points all level-$k$ hypertriangulations have the same number of triangles; see \cite{EGGHS23} for interpretation of results from \cite{Gal18,Pos06}.

\medskip \noindent \textbf{Maximal and maximum hypertriangulations.}
Recall that a hypertriangulation is \emph{maximal} if no other hypertriangulation of the same level subdivides it.
We say a hypertriangulation is \emph{maximum} if no other hypertriangulation of the same level has more triangles.
In an attempt to generalize Lemma~\ref{lem:complete_implies_maximal} to levels beyond $2$, we conjecture that the number of triangles in a maximal hypertriangulation depends on the given points but not on how these points are triangulated.
\begin{conjecture_new}
  \label{conj:C}
  Let $A \subseteq \Rspace^2$ be finite and generic.
  Then any two maximal level-$k$ hypertriangulations of $A$ have the same and therefore maximum number of triangles.
  In other words, every maximal level-$k$ hypertriangulation is maximum.
\end{conjecture_new}
The conjecture holds for points in convex position \cite{Gal18,Pos06}, and we have verified it for a few small configurations in non-convex position.
If true, this might have combinatorial meaning as the vertices of maximal hypertriangulations would then encode data from the matroid defined by the point set. 
We refer to \cite{GP17} for an extensive discussion of this topic in connection to zonotopal tilings and collections of separated subsets, in particular for points in convex position.



\begin{thebibliography}{21}

{\footnotesize

\bibitem{Aur90}
{\sc F.\ Aurenhammer.}
A new duality result concerning Voronoi diagrams. 
\emph{Discrete Comput.\ Geom.} {\bf 5} (1990), 243–-254.

\bibitem{CDS12}
{\sc S.-W.\ Cheng, T.K.\ Dey and J.R.\ Shewchuk.}
\emph{Delaunay Mesh Generation.}
CRC Press, Boca Raton, Florida, 2012.

\bibitem{Che89}
{\sc L.P.\ Chew.}
Constrained Delaunay triangulations.
\emph{Algorithmica} {\bf 4} (1989), 97--108.

\bibitem{DASi89}
{\sc  E.\ F.\ D'Azevedo and R.\ B.\ Simpson.}
On optimal interpolation triangle incidences.
\emph{SIAM J.\ Sci.\ Stat.\ Comput.} {\bf 10} (1989), 1063--1075.

\bibitem{Del34}
{\sc B.\ Delaunay.}
Sur la sph\`{e}re vide.
\emph{Izv.\ Akad.\ Nauk SSSR, Otdelenie Matematicheskii i Estestvennykh Nauk} {\bf 7} (1934), 793--800.

\bibitem{DRS10}
{\sc J.A.\ De Loera, J.\ Rambau and F.\ Santos.}
\emph{Triangulations. Structures for Algorithms and Applications.}
Springer-Verlag, Berlin, 2010.

\bibitem{EGGHS23}
{\sc H.\ Edelsbrunner, A.\ Garber, M.\ Ghafari, T.\ Heiss and M.\ Saghafian.}
Flips in two-dimen\-sional hypertriangulations.
Inst.\ Sci.\ Techn.\ Austria, Klosterneuburg, Austria, 2023, \url{https://arxiv.org/abs/2212.11380}.

\bibitem{Epp92}
{\sc D.\ Eppstein.}
The farthest point Delaunay triangulation minimizes angles.
\emph{Comput.\ Geom.} {\bf 1} (1992), 143--148.

\bibitem{Fej76}
{\sc G.\ Fejes T\'oth.}
Multiple packing and covering of the plane with circles.
\emph{Acta\ Math.\ Acad.\ Sci.\ Hung.} {\bf 27} (1976), 135--140.

\bibitem{Gal18}
{\sc P.\ Galashin.} 
Plabic graphs and zonotopal tilings.
\emph{Proc.\ London Math.\ Soc.} {\bf 117}:4 (2018), 661--681.

\bibitem{GP17}
{\sc P.\ Galashin and A.\ Postnikov.} 
Purity and separation for oriented matroids. Arxiv preprint (2017), \url{https://arxiv.org/abs/1708.01329}.

\bibitem{HNU99}
{\sc F.\ Hurtado, M.\ Noy and J.\ Urrutia.}
Flipping edges in triangulations,
\emph{Discrete Comput.\ Geom.} {\bf 22} (1999), 333--346.

\bibitem{Law77}
{\sc C.L.\ Lawson.}
Software for $C^1$ surface interpolation.
In \emph{Mathematical Software}, ``Proc.\ Sympos.\ at the Mathematics Research Center, Univ.\ Wisconsin–Madison, 1977'', 161--194.

\bibitem{Lee82}
{\sc D.-T.\ Lee.}
On $k$-nearest neighbor Voronoi diagrams in the plane.
\emph{IEEE Trans.\ Comput.} {\bf 31} (1982), 478--487.

\bibitem{Mus03}
{\sc O.\ Musin.}
Properties of the Delaunay triangulation.
In ``Proc.\ 13th Ann.\ Sympos.\ Comput.\ Geom., 1997'', ACM Press, 424--426.

\bibitem{OlSa21}
{\sc J.A.\ Olarte and F.\ Santos.}
Hypersimplicial subdivisions.
\emph{Sel.\ Math.\ New Ser.} {\bf 28} (2021), 4.

\bibitem{Pos06}
{\sc A.\ Postnikov.}
Total positivity, Grassmannians, and networks.
ArXiv preprint (2006),
\url{https://arxiv.org/abs/math/0609764v1}.

\bibitem{Raj94}
{\sc V.T.\ Rajan.}
Optimality of the Delaunay triangulation in $\Rspace^d$.
\emph{Discrete Comput.\ Geom.} {\bf 12} (1994), 189--202.

\bibitem{Rip90}
{\sc S.\ Rippa.}
Minimal roughness property of the Delaunay triangulation.
\emph{Computer Aided Geometric Design} {\bf 7} (1990), 489--497.

\bibitem{ShHo75}
{\sc M.I.\ Shamos and D.\ Hoey.}
Closest-point problems.
\emph{In} ``Proc.\ 16th Ann.\ IEEE Symp.\ Found.\ Comput.\ Sci., 1975'', 151--162.

\bibitem{Sib78}
{\sc R.\ Sibson.}
Locally equiangular triangulations.
\emph{Comput.\ J.} {\bf 21} (1978), 243--245.

\bibitem{Vor08}
{\sc G.\ Voronoi.}
Nouvelles applications des param\`{e}tres continus \`{a} la th\'{e}orie des formes quadratiques.
\emph{J.\ Reine Angew.\ Math.} {\bf 133} (1907/08), 97--178, {\bf 134} (1908/09), 198--287, and {\bf 136} (1909), 67--181.


\bibitem{Zie95}
{\sc G.M.\ Ziegler.}
\emph{Lectures on Polytopes.}
Springer, New York, 1995.

}
\end{thebibliography}
\end{document}